\newcommand{\handle}[1]{$#1$\text{-handle}}
\newcommand{\handles}[1]{$#1$\text{-handles}}
\newcommand{\calA}[1]{#1^{\mathcal{A}}}
\newcommand{\calS}[1]{#1^{\mathcal{S}}}
\newcommand{\halff}{\text{$\lfloor \frac{n}{2} \rfloor$}}
\newtheorem*{rep@lemma}{\rep@title}
\newcommand{\newreplemma}[2]{%
\newenvironment{rep#1}[1]{%
 \def\rep@title{#2 \ref{##1}}%
 \begin{rep@lemma}}%
 {\end{rep@lemma}}}
\newtheorem*{rep@definition}{\rep@title}
\newcommand{\newrepdefinition}[2]{%
\newenvironment{rep#1}[1]{%
 \def\rep@title{#2 \ref{##1}}%
 \begin{rep@definition}}%
 {\end{rep@definition}}}
\newtheorem*{rep@corollary}{\rep@title}
\newcommand{\newrepcorollary}[2]{%
\newenvironment{rep#1}[1]{%
 \def\rep@title{#2 \ref{##1}}%
 \begin{rep@corollary}}%
 {\end{rep@corollary}}}
\newtheorem*{rep@algorithm}{\rep@title}
\newcommand{\newrepalgorithm}[2]{%
\newenvironment{rep#1}[1]{%
 \def\rep@title{#2 \ref{##1}}%
 \begin{rep@algorithm}}%
 {\end{rep@algorithm}}}
\newtheorem*{rep@theorem}{\rep@title}
\newcommand{\newreptheorem}[2]{%
\newenvironment{rep#1}[1]{%
 \def\rep@title{#2 \ref{##1}}%
 \begin{rep@theorem}}%
 {\end{rep@theorem}}}
\newtheorem{theorem}{Theorem}[section]
\newtheorem{lemma}[theorem]{Lemma}
\newtheorem{corollary}[theorem]{Corollary}
\newtheorem{proposition}[theorem]{Proposition}
\theoremstyle{definition}
\newtheorem{definition}[theorem]{Definition}
\newtheorem{remark}[theorem]{Remark}
\newtheorem{example}[theorem]{Example}
\definecolor{amaranth}{rgb}{0.9, 0.17, 0.31} %dark red
\definecolor{carrotorange}{rgb}{0.93, 0.57, 0.13} %orange
\definecolor{citrine}{rgb}{0.89, 0.82, 0.04} %dark yellow
\definecolor{dartmouthgreen}{rgb}{0.05, 0.5, 0.06} %green
\definecolor{ballblue}{rgb}{0.13, 0.67, 0.8} %blue
\definecolor{ceruleanblue}{rgb}{0.16, 0.32, 0.75} %deeper blue
\definecolor{amethyst}{rgb}{0.6, 0.4, 0.8} %purple
\definecolor{amber}{rgb}{1.0, 0.75, 0.0} %amber
\definecolor{burlywood}{rgb}{0.87, 0.72, 0.53} %beigebrown
\title{Handle decompositions and stabilizations of open books}
\author{Chun-Sheng Hsueh}
\address{Humboldt-Universit\"at zu Berlin, Rudower Chaussee 25, 12489 Berlin, Germany.}
\email{chun-sheng.hsueh@hu-berlin.de}
\begin{document}

\begin{abstract}
We build handle decompositions of $n$-manifolds that encode given open book decompositions and describe handle slides that reveal new open book decompositions on the same underlying manifold, for $n\geq3$. This recovers known stabilization operations for open books. As an application, we show that any open book with trivial monodromy can be stabilized to an open book whose page is a boundary connected sum of trivial disk bundles over spheres.
\end{abstract}

\keywords{Handle calculus, open books, stabilization} 

\makeatletter
\@namedef{subjclassname@2020}{%
  \textup{2020} Mathematics Subject Classification}
\makeatother%For 2020

\subjclass[2020]{57K45, 57M50} % Mathematical subject classification
%57K45      Higher-dimensional knots and links
%57M50  	General geometric structures on low-dimensional manifolds

\maketitle

\section{Introduction}
An \textit{abstract open book decomposition} of a closed smooth $n$-manifold $X$ is a pair $(M,\varphi)$ consisting of a compact $(n-1)$-manifold $M$ with non-empty boundary and a diffeomorphism $\varphi\colon M\rightarrow M$ that fixes a neighborhood of the boundary pointwise such that $\operatorname{Ob}(M,\varphi)=M\times [0,1]/\sim$ is diffeomorphic to $X$, where $$(x,1)\sim(\varphi(x),0)\text{ for every }x\in M\text{, and}$$
$$(y,t)\sim(y,t')\text{ for every } t,t'\in [0,1]\text{, and }y\in\partial M.$$
The manifold $M$ is called the \textit{page}, its boundary $\partial M$ is referred to as the \textit{binding}, and $\varphi$ is known as the \textit{monodromy}. Two open book decompositions $(M_1,\varphi_1),(M_2,\varphi_2)$ are considered \textit{equivalent} if there exists a diffeomorphism $h\colon M_1\rightarrow M_2$ such that $h\circ \varphi_2=\varphi_1\circ h$. Every closed orientable $3$-manifold admits an open book decomposition~\cite{3ob}, while the existence of open book decompositions in higher dimensions is established~\cite{Lawson, quinn, Winkelnkemper} except in dimension four~\cite{kastenholz2025simplicialvolumeopenbooks}.

%We construct handle decompositions of open books in which both the handle structure of the page and the monodromy map are explicitly encoded. Within this framework, we introduce a class of handle slides on open books, called \textit{handle exchange}, which transforms the handle decomposition coming from an open book into one from which a new open book decomposition on the same underlying manifold can be read off.

\subsection{Main results}
Theorem~\ref{thm technical} serves as the key technical result from which Theorems~\ref{thm stabilization of open books} and~\ref{thm odd stabilization} follow, bridging lines of work that had appeared unrelated. The former recovers open books discovered by Quinn~\cite[Section~4.4]{quinn}, and the latter corresponds to a standard stabilization in contact topology (see Remark~\ref{remark contact}).

\begin{theorem}\label{thm stabilization of open books}
    Let $(M,\varphi)$ be an open book decomposition on $X$ of dimension $n\geq 3$. Then for each integer $k\in[2,n-1]$, there exists a monodromy map $\varphi_k$ on $M \mathop{\natural} (S^{k-1}\times D^{n-k})\mathop{\natural} (S^{n-k}\times D^{k-1})$ such that $\varphi_k$ restricts to $\varphi$ on $M$, and
    $$(M\mathop{\natural} (S^{k-1}\times D^{n-k})\mathop{\natural} (S^{n-k}\times D^{k-1}),\varphi_k)$$
    is an open book decomposition on $X$ not equivalent to $(M,\varphi)$.
\end{theorem}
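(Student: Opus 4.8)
The plan is to realize $(M_k,\varphi_k)$ as a special case of the handle-decomposition construction of Theorem~\ref{thm technical}, and then to distinguish it from $(M,\varphi)$ by an elementary homology computation on the pages.

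For the construction, recall first the topological fact that attaching a \emph{trivial} $j$-handle to the disk $D^{n-1}$ produces $S^{j}\times D^{n-1-j}$ (with the handle decomposition consisting of one $0$-handle and one $j$-handle). Hence attaching to the page $M$, as boundary connected summands, a trivial $(k-1)$-handle and a trivial $(n-k)$-handle produces exactly
\[
  M_k \;\defeq\; M\mathop{\natural}(S^{k-1}\times D^{n-k})\mathop{\natural}(S^{n-k}\times D^{k-1});
\]
the hypothesis $2\le k\le n-1$ guarantees that both $k-1$ and $n-k$ lie in $[1,n-2]$. Simply extending $\varphi$ by the identity over the two new handles will not do, since $\operatorname{Ob}\bigl(M_k,\varphi\mathop{\natural}\mathrm{id}\mathop{\natural}\mathrm{id}\bigr)\cong X\mathbin{\#}(S^{k-1}\times S^{n-k+1})\mathbin{\#}(S^{n-k}\times S^{k})$, which is not $X$ in general; so the desired open book must be produced more carefully. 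I would do this by taking the handle decomposition of $X$ that encodes $(M,\varphi)$ furnished by Theorem~\ref{thm technical}, introducing into it a canceling pair of handles (which does not change $X$), and then performing the handle slides prescribed by Theorem~\ref{thm technical}: the resulting handle decomposition is again of open-book type, its page is $M_k$, and its monodromy $\varphi_k$ agrees with $\varphi$ on $M$ because the handles comprising $M$ are never touched by the moves. I expect the main obstacle to be exactly this step — certifying that the slid handle decomposition is again an open book, with the precise page $M_k$ and with a monodromy extending $\varphi$ — but this is precisely what Theorem~\ref{thm technical} provides, so the task here reduces to applying that theorem with the appropriate choice of canceling pair and slides.

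It remains to check that $(M_k,\varphi_k)$ is not equivalent to $(M,\varphi)$. An equivalence of open book decompositions in particular restricts to a diffeomorphism of pages $M\xrightarrow{\ \cong\ }M_k$. But $M_k$ is homotopy equivalent to $M\vee S^{k-1}\vee S^{n-k}$ (boundary connected sum collapses the gluing disk, and $S^{j}\times D^{n-1-j}\simeq S^{j}$), so, using $k-1\ge 1$,
\[
  \operatorname{rank} H_{k-1}(M_k;\mathbb{Q}) \;\ge\; \operatorname{rank} H_{k-1}(M;\mathbb{Q}) + 1 \;>\; \operatorname{rank} H_{k-1}(M;\mathbb{Q}).
\]
Hence $M$ and $M_k$ are not diffeomorphic, and therefore $(M_k,\varphi_k)$ and $(M,\varphi)$ are not equivalent open book decompositions of $X$. (Note also that, since $n-k\ge 1$ as well, $M_k$ carries a further new homology class in degree $n-k$.)
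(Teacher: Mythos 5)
Your proposal follows essentially the same route as the paper's own proof, and the homology argument for non-equivalence correctly spells out a claim the paper only asserts. The one place where your description is imprecise — and where the paper is more careful — is where you say you would introduce the canceling pair into the handle decomposition of $X$. Theorem~\ref{thm technical} only applies to handle decompositions \emph{induced} by a handle decomposition of the page, so the canceling pair has to be inserted at the level of the page: one regards $S^{k-1}\times D^{n-k}$ as a $(k-1)$-handle attached along $\partial M$ in a disk $U'$ where $\varphi$ is already the identity, attaches a canceling $k$-handle $\operatorname{h}^k$ so that $M\mathop{\natural}(S^{k-1}\times D^{n-k})\cup\operatorname{h}^k\cong M$, and extends $\varphi$ by the identity over the new handles. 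Only then is $\mathcal{A}=\{\operatorname{h}^k\}$ verifiably an exchangeable selection (nothing is attached along $b(\operatorname{h}^k)$, and $\varphi|_{\operatorname{h}^k}=\operatorname{id}$), and Corollary~\ref{cor partially simplifying the page} produces a monodromy $\calA{\varphi}$ on the exchanged page $M\mathop{\natural}(S^{k-1}\times D^{n-k})\mathop{\natural}(S^{n-k}\times D^{k-1})$ with $\operatorname{Ob}$ still diffeomorphic to $X$; that $\calA{\varphi}$ restricts to $\varphi$ on $M$ comes from the last paragraph of the proof of Theorem~\ref{thm technical}. Your remaining steps — in particular the observation that the naive $\varphi\mathop{\natural}\operatorname{id}\mathop{\natural}\operatorname{id}$ changes $X$ by connected sums, and the $H_{k-1}$ rank count showing $M_k\not\cong M$ — are correct and match the paper's intent.
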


\begin{theorem}\label{thm odd stabilization}
    Let $(M,\varphi)$ be an open book decomposition on $X$, where the page $M$ has dimension $2\ell$ ($\ell\geq1$). Then there exists a monodromy map $\varphi'$ on $M\mathop{\natural}({S}^{\ell}\times D^{\ell})$ such that $\varphi'$ restricts to $\varphi$ on $M$, and $$(M\mathop{\natural}({S}^{\ell}\times D^{\ell}),\varphi')$$
    is an open book decomposition on $X$ not equivalent to $(M,\varphi)$.
\end{theorem}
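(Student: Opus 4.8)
The plan is to obtain the stabilized open book as the outcome of a local handle manipulation performed on the handle decomposition of $X$ supplied by Theorem~\ref{thm technical}, reading off the new monodromy from the correspondence between handle slides and monodromy modifications that Theorem~\ref{thm technical} provides. Since $\dim X=2\ell+1$ is odd, the page $M'\defeq M\mathop{\natural}(S^{\ell}\times D^{\ell})$ is obtained from $M^{2\ell}$ by attaching a single $\ell$-handle along a trivially embedded $(\ell-1)$-sphere lying in a collar of the binding, with the trivial framing; a monodromy on $M'$ that restricts to $\varphi$ on $M$ and is the identity near $\partial M'$ is the same datum as a self-diffeomorphism $\psi$ of the summand $S^{\ell}\times D^{\ell}$ equal to the identity near its boundary, and I write $\varphi\mathop{\natural}\psi$ for the resulting monodromy on $M'$. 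The goal is to identify a particular such $\psi$, call it $\tau$, with $\operatorname{Ob}(M',\varphi\mathop{\natural}\tau)\cong X$.

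By Theorem~\ref{thm technical}, replacing $(M,\varphi)$ by $(M',\varphi\mathop{\natural}\psi)$ alters the handle decomposition of the total space only locally --- near the new handle and near the binding --- by a prescribed collection of handles whose essential part in the middle range is a pair consisting of an $\ell$-handle $H$ (the page copy of the new handle) and an $(\ell+1)$-handle $H'$ (attached through $\psi$). For $\psi=\mathrm{id}$ this collection assembles into a connected summand, giving $\operatorname{Ob}(M',\varphi\mathop{\natural}\mathrm{id})\cong X\#(S^{\ell}\times S^{\ell+1})$ --- the exact analogue of the fact that plumbing an annulus with identity monodromy onto a three-dimensional open book yields a connected sum with $S^{1}\times S^{2}$. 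To recover $X$ one slides $H'$ once over $H$, arranging that the belt sphere of $H$ meets the attaching sphere of $H'$ transversely in a single point; the pair then cancels and the spurious summand disappears. By the handle-slide dictionary of Theorem~\ref{thm technical}, this slide is realised on the open-book side by replacing the identity extension with a twist diffeomorphism $\tau$ of $S^{\ell}\times D^{\ell}$ supported in its interior --- the higher-dimensional stand-in for the Dehn twist in the classical Hopf-band stabilization of three-dimensional open books, and literally a Dehn twist when $\ell=1$. Since $\tau$ is supported away from $M$, the monodromy $\varphi'\defeq\varphi\mathop{\natural}\tau$ restricts to $\varphi$ on $M$, and $\operatorname{Ob}(M',\varphi')\cong X$; equivalently $(S^{\ell}\times D^{\ell},\tau)$ is an open book of $S^{2\ell+1}$, and the construction amounts to Murasugi-summing this open book of the sphere onto $(M,\varphi)$.

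Finally, $(M',\varphi')$ is not equivalent to $(M,\varphi)$, and this is automatic: an equivalence of open books restricts to a diffeomorphism of the pages, whereas $H_{\ell}(M')\cong H_{\ell}(M)\oplus\mathbb{Z}$, a group of strictly larger rank, so $M'$ and $M$ are not even diffeomorphic. I expect the bulk of the work to be the framing bookkeeping in the second paragraph: one must verify that it is precisely the trivial framing of the attaching $(\ell-1)$-sphere for which the induced $(\ell+1)$-handle cancels against the $\ell$-handle after a single slide, and one must pin down the twist diffeomorphism $\tau$ explicitly and confirm it can be taken supported in the interior of $S^{\ell}\times D^{\ell}$ --- both being exactly the kind of statement Theorem~\ref{thm technical} is engineered to certify, so that past the point of invoking it the argument is a short and essentially formal check.
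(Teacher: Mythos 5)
Your reduction is the right one and matches the paper: it suffices to exhibit a monodromy $\tau$ on $S^{\ell}\times D^{\ell}$ with $\operatorname{Ob}(S^{\ell}\times D^{\ell},\tau)\cong S^{2\ell+1}$, and the stabilization is then a Murasugi sum (open book connected sum) of $(M,\varphi)$ with this open book of the sphere. Your non-equivalence argument via $H_{\ell}(M')\cong H_{\ell}(M)\oplus\mathbb{Z}$ is also correct and in fact more explicit than what the paper writes at the corresponding point.

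The core construction, however, has two related problems. First, the step ``to recover $X$ one slides $H'$ once over $H$, arranging that the belt sphere of $H$ meets the attaching sphere of $H'$ transversely in a single point; the pair then cancels'' cannot happen. Starting from the handle decomposition of $\operatorname{Ob}(M',\varphi\mathop{\natural}\operatorname{id})\cong X\#(S^{\ell}\times S^{\ell+1})$, the algebraic intersection number of $a(H')$ with $b(H)$ is $0$ (this is exactly what produces the nontrivial $H_{\ell}$ of the summand); handle slides preserve the ambient manifold and cannot change this intersection number to $\pm1$, so no sequence of slides makes $H$ and $H'$ a cancelling pair, and no sequence of slides turns a handle decomposition of $X\#(S^{\ell}\times S^{\ell+1})$ into one of $X$. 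Second, Theorem~\ref{thm technical} is not the mechanism here and does not certify the twist $\tau$: the exchange moves of Theorem~\ref{thm technical} trade a selected $k$-handle of the page for an $S^{n-k}\times D^{k-1}$ summand, so a single application always changes the page, and the $k$-stabilization of Theorem~\ref{thm stabilization of open books} built from it necessarily adds \emph{two} summands $S^{k-1}\times D^{n-k}\mathop{\natural} S^{n-k}\times D^{k-1}$. The middle-dimensional stabilization adds only one copy of $S^{\ell}\times D^{\ell}$, which is precisely why it does not fit the exchange-move framework and requires a separate argument.

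What the paper actually does is dual to your plan: rather than starting from the trivial extension and trying to twist it into the correct $\tau$, it starts from a handle decomposition of $S^{2\ell+1}$ that is already a cancelling pair. Concretely, $\operatorname{hob}(S^{\ell}\times D^{\ell})$ is $D^{2\ell+1}$ with one $\ell$-handle $\mathbf{h}^{\ell}$ (induced from the page handle via Lemma~\ref{lemma handle decomposition on hob}); one attaches a cancelling $(\ell+1)$-handle $\mathbf{h}^{\ell+1}$ and a top handle, obtaining $S^{2\ell+1}$ by construction, and then observes that this handle decomposition has the shape of an induced one (Definition~\ref{def induced handle decomposition}) for page $S^{\ell}\times D^{\ell}$. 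The monodromy $\tau$ is then read off, via Lemma~\ref{lemma attaching region}, from where the lower hemisphere of the attaching region of $\mathbf{h}^{\ell+1}$ lands in the front cover. So $\tau$ is produced by the construction rather than guessed and verified; the verification you flag as ``the bulk of the work'' is thereby sidestepped. If you want to keep a twist-flavoured description you should instead cite Remark~\ref{remark contact}, where the identification of $\tau$ with a Dehn--Seidel twist in the contact setting is discussed, rather than Theorem~\ref{thm technical}.
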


We call the open book $(M\mathop{\natural} (S^{k-1}\times D^{n-k})\mathop{\natural} (S^{n-k}\times D^{k-1}),\varphi_k)$ the \textit{$k$-stabilization} of $(M,\varphi)$, and $(M\mathop{\natural}({S}^{\ell}\times D^{\ell}),\varphi')$ the \textit{middle-dimensional stabilization} of an odd-dimensional open book $(M,\varphi)$. Here we briefly discuss their relation to previously known stabilization. In a different vein, \cite{Generalized_plumbings} investigates methods for constructing new open books from existing ones, where the diffeomorphism type of the underlying manifolds is not preserved.

In dimension $n=3$, Harer conjectured in~\cite{Harer} that all open books on $S^3$ can be obtained by applying Hopf plumbings and deplumbings to the standard open book $(D^2, \operatorname{id})$ on $S^3$, a statement later proven by Giroux and Goodman~\cite{Giroux_Goodman}. We recall the definition of Hopf plumings in terms of handle decompositions in Example~\ref{ex dehn twist}. In Remark~\ref{remark on relation with other stabilizations}, we show that the middle-dimensional stabilization defined in Theorem~\ref{thm odd stabilization} is equivalent to a single Hopf plumbing, while the $2$-stabilization mentioned in Theorem~\ref{thm stabilization of open books} corresponds to performing Hopf plumbing twice. In general, one can associate a $2$-plane field to an open book on a $3$-manifold $M$ by taking the field of tangent planes to the pages away from the binding and extending it canonically near the binding, which can be viewed as a section of the unit cotangent bundle $ST^*M$. Two open book decompositions of a given $3$-manifold admit isotopic stabilization if and only if the open books define $2$-plane fields in $H_2(ST^*M)$ of the same homology class~\cite{Giroux_Goodman}. In higher dimensions, it remains unclear which stabilizations are necessary to relate open book decompositions on the same manifold.
%\cite{Lines1} and~\cite{Lines2} studied the stabilization of simple open books of dimension $2q+1$ ($q\geq3$) and $2q+2$ ($q\geq4$), respectively, which are open books with $(q-1)$-connected pages and $(q-2)$-connected bindings. \cite{Generalized_plumbings} includes a survey of the relevant literature and, following the spirit of~\cite{Gabai,Stallings}, generalizes plumbing and Murasugi sum to an operation of summing open books called \textit{patching}.

When $n=4$, the $2$-stabilization provided by Theorem~\ref{thm stabilization of open books} coincides with the stabilization used in~\cite{trisecting_openbook}, where it is shown that this stabilization corresponds to the stabilization of a trisection diagram. The $2$- and $3$-stabilizations defined in Theorem~\ref{thm stabilization of open books} modify the binding by adding a copy of $S^2$. They differ from the stabilization procedure introduced in~\cite{CPV}, which arises in the study of $4$-dimensional open books associated with maximally non-integrable plane fields, known as \textit{Engel structures}. Their stabilization preserves the property of the binding being a union of tori -- a natural condition for Engel manifolds. 

For $p+q+1\geq5$, Quinn briefly pointed out in~\cite[Section~4.4]{quinn} the existence of open books on $S^{p+q+1}$ with pages $(S^p\times D^q)\mathop{\natural} (S^q\times D^p)$, without discussing its monodromy. Interestingly, Budney--Gabai's \emph{barbell diffeomorphisms}~\cite{budney_gabai} is the monodromy map on $S^2\times D^2\mathop{\natural} S^2\times D^2$ needed for constructing an open book on $S^5$. For each $n\geq 3$, we explicitly describe $(n-2)$ different open books on $S^n$ with such pages, which are also known as \emph{barbell manifolds}~\cite{budney_gabai_2}.
\begin{theorem}\label{thm book equivalent monodromies}
    For each integer $k\in[2,n-1]$ and $n\geq3$, there exists a monodromy map $\tau_{k}$ on $(S^{k-1}\times D^{n-k})\mathop{\natural} (S^{n-k}\times D^{k-1})$ such that $$((S^{k-1}\times D^{n-k})\mathop{\natural} (S^{n-k}\times D^{k-1}),\tau_{k})$$ is an open book decomposition on $S^n$. Moreover, for each integer $k\in[2,\halff]$ and $n\geq4$, the pair of monodromy maps $$\tau_k,\tau_{n-k+1}\colon (S^{k-1}\times D^{n-k})\mathop{\natural} (S^{n-k}\times D^{k-1})\rightarrow (S^{k-1}\times D^{n-k})\mathop{\natural} (S^{n-k}\times D^{k-1})$$ are non-isotopic (relative the boundary).
\end{theorem}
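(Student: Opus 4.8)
The plan is to obtain the first statement as an immediate application of Theorem~\ref{thm stabilization of open books}, and to reduce the second to a non-triviality statement in the mapping class group of the page. For the first: $(D^{n-1},\operatorname{id})$ is the standard open book on $S^n$, since $\operatorname{Ob}(D^{n-1},\operatorname{id})\cong\partial(D^{n-1}\times D^2)=S^n$; applying Theorem~\ref{thm stabilization of open books} with $(M,\varphi)=(D^{n-1},\operatorname{id})$ and $X=S^n$, and using that boundary connected sum with a disk is neutral, the $k$-stabilization has page diffeomorphic to $P\defeq(S^{k-1}\times D^{n-k})\mathop{\natural}(S^{n-k}\times D^{k-1})$, and we take $\tau_k$ to be its monodromy, so that $\operatorname{Ob}(P,\tau_k)\cong S^n$ for every $k\in[2,n-1]$.

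For the non-isotopy, note first that $P$ depends on $k$ only through the unordered pair $\{k-1,n-k\}$, so $\tau_k$ and $\tau_{n-k+1}$ really are self-diffeomorphisms of the same manifold, and the assertion is that they are distinct in $\pi_0\operatorname{Diff}(P,\partial P)$; it would suffice, for instance, to show the open books $(P,\tau_k)$ and $(P,\tau_{n-k+1})$ on $S^n$ are inequivalent. The cheap invariants will not do it: throughout the range $k\in[2,\halff]$ one has $k-1<n-k$ and $P\simeq S^{k-1}\vee S^{n-k}$, so $H_{k-1}(P)\cong H_{n-k}(P)\cong\mathbb Z$, any monodromy acts on each summand merely by a sign, and that data --- together with the Alexander module of the binding complement --- is symmetric under $k\leftrightarrow n-k+1$. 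So I would argue from the construction itself: the handle presentation behind Theorems~\ref{thm technical} and~\ref{thm stabilization of open books} should realize $\tau_k$, up to isotopy rel $\partial P$, as a composite of two ``model twists'' --- one supported near an embedded $(k-1)$-sphere, the other near an embedded $(n-k)$-sphere meeting it transversely once --- and I would then compare the proper isotopy classes rel $\partial$ of $\tau_k(\Delta)$ and $\tau_{n-k+1}(\Delta)$, where $\Delta\cong D^{n-k}$ is a co-core disk dual to a generator of $H_{k-1}(P)$ (equivalently, compare the images of a generating $(k-1)$-sphere). Both images are visible in the handle picture, so the whole matter reduces to certifying that they are not properly isotopic.

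That certification is the main obstacle, and it has to be delicate: it is invisible on homology, and it cannot be localized in a ball, for otherwise the difference between $\tau_2$ and $\tau_{n-1}$ would lie in $\pi_0\operatorname{Diff}(D^3,\partial D^3)=0$ when $n=4$, contradicting the statement. I expect the decisive tool to be a winding or linking invariant of the complement of the displaced disk inside $P$, or an analogue for these pages of the Budney--Gabai $W_3$-obstruction that already governs the barbell monodromy in the excluded middle case $n=2k-1$. The hypotheses $n\ge4$ and $k\le\halff$ enter only to make the index range non-empty and to count each pair $\{k,\,n-k+1\}$ once.
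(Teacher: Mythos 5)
Your treatment of the first half is exactly the paper's: apply Theorem~\ref{thm stabilization of open books} to the standard open book $(D^{n-1},\operatorname{id})$ on $S^n$ and absorb the disk into the boundary connected sum. No issue there.

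For the non-isotopy statement, however, you dismiss homology too quickly and then leave the actual argument as an unverified expectation (``I expect the decisive tool to be a winding or linking invariant\dots or an analogue of the Budney--Gabai $W_3$-obstruction''), so the proof is incomplete precisely where the work has to happen. The reason your pessimism about homology is misplaced is that you only considered $\mathrm{H}_*$ of the page $P=(S^{k-1}\times D^{n-k})\mathop{\natural}(S^{n-k}\times D^{k-1})$, where indeed $\mathrm{H}_{k-1}(P)\cong\mathrm{H}_{n-k}(P)\cong\mathbb{Z}$ and any monodromy acts only by a sign. The paper's key move --- and the idea you are missing --- is to pass to the \emph{double} $DP\cong(S^{k-1}\times S^{n-k})\#(S^{n-k}\times S^{k-1})$, extending $\tau_k$ and $\tau_{n-k+1}$ by the identity on the second copy of $P$. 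There the relevant groups have rank two: $\mathrm{H}_{k-1}(DP)$ is generated by $\lambda=S^{k-1}\times\{\mathrm{pt}\}$ and by $\mu$, the double of a cocore disk $\{\mathrm{pt}\}\times D^{k-1}\subset S^{n-k}\times D^{k-1}$, while $\mathrm{H}_{n-k}(DP)$ is generated by the analogous classes $\alpha$ and $\beta=S^{n-k}\times\{q\}$, $q\in\partial D^{k-1}$. Because $\tau_k$ is trivial on $S^{k-1}\times D^{n-k}$ it fixes $\lambda$ and $\alpha$, and because it is trivial near $\partial P$ it fixes $\beta$; but it sends $\mu$ to $\lambda\pm\mu$, since $\tau_k(\{\mathrm{pt}\}\times D^{k-1})$ is the attaching sphere of the dual handle and is isotopic to the belt sphere of the induced $(k-1)$-handle of $S^{k-1}\times D^{n-k}$. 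So $\mathrm{H}_{k-1}(\tau_k)$ is a nontrivial shear while $\mathrm{H}_{n-k}(\tau_k)=\operatorname{id}$, and by symmetry $\mathrm{H}_{n-k}(\tau_{n-k+1})$ is a nontrivial shear while $\mathrm{H}_{k-1}(\tau_{n-k+1})=\operatorname{id}$; this already distinguishes the two up to isotopy rel $\partial$. In particular no sophisticated disk-isotopy or $W_3$-type invariant is needed, and the obstruction is certainly not localized in a ball (it is a class in $\mathrm{H}_*(DP)$), which resolves the tension you raised about $\pi_0\operatorname{Diff}(D^3,\partial D^3)=0$.
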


%Given isotopic monodromy maps $\phi$ and $\varphi$ on a page $M$, then $\operatorname{Ob}(M,\phi)$ and $\operatorname{Ob}(M,\varphi)$ are diffeomorphic. The converse is false.
This provides examples of manifolds admitting open book decompositions with the same page but non-isotopic monodromy maps. Examples of $3$-manifolds that admit open book decompositions with an infinite family of pairwise non-isotopic monodromy maps on a fixed page can be found in~\cite{Stabilization_height,Stabilization_height_2,3mfd_nonisotopic_monodromies}.

% However, the converse does not hold: for example, the positive and negative Dehn twists $\tau_\pm$ on the annulus $A$ are not isotopic, yet the open books $\operatorname{Ob}(A,\tau_\pm)$ are diffeomorphic. Both cases define open books on $S^3$ but with opposite orientations, since $\tau_+$ and $\tau_-$ are inverses of each other. 

\subsection{Application}
We show that every open book of dimension $n\geq4$ with trivial monodromy admits a handle decomposition that, after suitable handle slides, yields a handle decomposition of an open book whose page is a boundary connected sum of trivial disk bundles over spheres. This result generalizes a previously known phenomenon in dimension 4~\cite{hsueh2023kirby}.
    
\begin{theorem}\label{thm generalization}
    Let $(M,\operatorname{id})$ be an open book decomposition on a dimension $n\geq 4$ manifold $X$ with trivial monodromy. Let $\mu=(\mu_1,\mu_2,\dots,\mu_{n-2})\in\mathbb{N}_0^{n-2}$ be a tuple of non-negative integers such that $M$ admits a handle decomposition with one \handle{0}, $\mu_1$ \handles{1} and $\mu_i$ \handles{(n-i)}, $i=2,\dots,n-2$. Then there exists a monodromy map $$\sigma_\mu\colon\mathop{\natural}_{i=1}^{n-2}\mathop{\natural}_{\mu_i}(S^{i}\times D^{n-1-i})\rightarrow\mathop{\natural}_{i=1}^{n-2}\mathop{\natural}_{\mu_i}(S^{i}\times D^{n-1-i})$$ such that 
    $$(\mathop{\natural}_{i=1}^{n-2}\mathop{\natural}_{\mu_i}(S^{i}\times D^{n-1-i}),\sigma_\mu)$$
    is an open book decomposition on $X$.
\end{theorem}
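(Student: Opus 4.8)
The plan is to derive the statement from the handle‑theoretic description of open books in Theorem~\ref{thm technical}, together with the stabilizations of Theorem~\ref{thm stabilization of open books}. Roughly: one stabilizes $(M,\operatorname{id})$ enough times that, for each handle of high index in the page, there is a newly created handle one dimension lower that can absorb it, and then one carries out the absorbing cancellations \emph{on the handle decomposition of $X$} furnished by Theorem~\ref{thm technical} — not on the page itself, where a cancellation would leave the page unchanged — so that the modified diagram is read off by Theorem~\ref{thm technical} as encoding an open book with a smaller page.

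Fix a handle decomposition $\mathcal H_M$ of $M$ with one $0$‑handle, $\mu_1$ $1$‑handles and $\mu_i$ handles of index $n-i$ for $i=2,\dots,n-2$. If $\mu_i=0$ for all $i\ge 2$, then $M$ is built from a $0$‑handle and $\mu_1$ $1$‑handles, and since any two attachments of a $1$‑handle to a connected manifold are isotopic we get $M\cong\mathop{\natural}_{\mu_1}(S^1\times D^{n-2})$, which is the page claimed in the statement; then $\sigma_\mu=\operatorname{id}$ works. In general set $P_a\defeq(S^a\times D^{n-1-a})\mathop{\natural}(S^{n-1-a}\times D^{a})$ for $1\le a\le n-2$; this carries a handle decomposition with one $0$‑handle, one $a$‑handle and one $(n-1-a)$‑handle, and it is exactly the summand adjoined to the page by the $(a+1)$‑stabilization of Theorem~\ref{thm stabilization of open books}.

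Now, for each $i=2,\dots,n-2$, I would perform $\mu_i$ successive $(i+1)$‑stabilizations of $(M,\operatorname{id})$. Iterating Theorem~\ref{thm stabilization of open books}, the result is an open book on $X$ with page $M\mathop{\natural}\big(\mathop{\natural}_{i=2}^{n-2}\mathop{\natural}_{\mu_i}P_i\big)$, and Theorem~\ref{thm technical} furnishes a handle decomposition $\mathcal H$ of $X$ encoding it, in which the $i$‑th batch of stabilizations contributes $\mu_i$ handles of index $i$ and $\mu_i$ handles of index $n-i-1$, the latter sitting one filtration level below handles of index $n-i$. Using the handle slides along which Theorem~\ref{thm technical} keeps the diagram open‑book‑encoding, I would route each of the $\mu_i$ original index‑$(n-i)$ handles of $\mathcal H_M$ onto one of the $\mu_i$ new index‑$(n-i-1)$ handles coming from the $(i+1)$‑stabilizations, and cancel the resulting pairs, for every $i$. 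The resulting handle decomposition $\mathcal H'$ of $X$ still encodes an open book on $X$, and the handles of its page are: one $0$‑handle, the $\mu_1$ $1$‑handles inherited from $\mathcal H_M$, and the $\mu_i$ surviving index‑$i$ handles for $i=2,\dots,n-2$ — that is, precisely the handles of $\mathop{\natural}_{i=1}^{n-2}\mathop{\natural}_{\mu_i}(S^i\times D^{n-1-i})$. Verifying that these survivors are attached trivially then identifies the page with $\mathop{\natural}_{i=1}^{n-2}\mathop{\natural}_{\mu_i}(S^i\times D^{n-1-i})$, and we take $\sigma_\mu$ to be the resulting monodromy.

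The heart of the argument is the two verifications just flagged. First, the slides bringing the top handles of $\mathcal H_M$ into cancelling position with the newly created handles must be performed within the class of slides along which Theorem~\ref{thm technical} still yields an open book; an arbitrary slide would destroy the structure, so the route of each cancelling handle has to be chosen among the admissible ones, and one must also track what these slides do to the encoded monodromy. Second, one must check that after these slides and cancellations the surviving handles are attached as unknotted, untwisted handles, so that the page is the \emph{standard} boundary connected sum of trivial disk bundles over spheres and not a twisted avatar of it — this is where any extra stabilizations and slides, beyond the bookkeeping count above, are spent. The monodromy $\sigma_\mu$ is then simply whatever the construction leaves behind, which is all the statement requires.
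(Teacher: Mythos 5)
The paper's proof is much more direct and avoids stabilization entirely. Because $\varphi=\operatorname{id}$, any selection is exchangeable, and the paper applies Theorem~\ref{thm technical} together with Corollary~\ref{cor partially simplifying the page} to the selection $\mathcal{A}$ of \emph{all} handles of index $\geq 2$ in the given handle decomposition $\operatorname{h}\in\mathcal{H}(\mu_1,\mu_{n-2},\dots,\mu_2)$. Each $(n-i)$-handle of $M$ is thereby traded in one step for an $S^{i}\times D^{n-1-i}$ boundary connected summand; the $0$- and $1$-handles already build $\mathop{\natural}_{\mu_1}(S^{1}\times D^{n-2})$, so the exchanged page is $\mathop{\natural}_{i=1}^{n-2}\mathop{\natural}_{\mu_i}(S^{i}\times D^{n-1-i})$ on the nose, with monodromy $\sigma_\mu=\calA{\operatorname{id}}$.

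Your stabilize-then-cancel route has a genuine gap precisely where you flag one. The only slides that Theorem~\ref{thm technical} proves to preserve the open-book-encoding structure of $\mathbf{h}\cup_\varphi\mathbf{h}^*$ are the exchange moves of Definition~\ref{def A-modification of handle decomposition}. Cancelling an $(n-i)$-handle coming from $\operatorname{h}$ against a freshly stabilized $(n-1-i)$-handle is not an exchange move, so nothing in the paper's machinery certifies that the modified handle decomposition of $X$ still has the form $\mathbf{h}'\cup_{\varphi'}(\mathbf{h}')^*$ for some smaller page, let alone identifies that page. (Recall that cancellation does not change the diffeomorphism type of the manifold being built, so one would also have to explain why the page ``seen'' by the remaining filtration is smaller than the stabilized page $M\mathop{\natural}\bigl(\mathop{\natural}\,\mathop{\natural}_{\mu_i}P_i\bigr)$.) Beyond that, you would have to verify your second flagged point — that the slides bringing these handles into cancelling position leave the surviving stabilization $i$-handles attached standardly — which is delicate because, depending on whether $i< n-1-i$ or $i> n-1-i$, the handle you are sliding sits either above or below the one whose attaching map you need to remain trivial. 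The cleaner observation your proposal misses is that an exchange move already accomplishes what you want the cancellation to do: it directly replaces an $(n-i)$-handle of the page with an $i$-handle, so there is nothing to cancel and no preliminary stabilization is needed.
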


\begin{corollary}\label{cor common page}
    Let $(M,\operatorname{id}_M)$ and $(N,\operatorname{id}_N)$ be open book decompositions on $n$-dimensional manifolds $X$ and $Y$, respectively. If
    \begin{equation*}
        \chi(M)-\chi(N)\begin{cases}
        =0 & \text{when $n-1$ is odd, or}\\
        \equiv 0\mod2 & \text{when $n-1$ is even,}
        \end{cases}
    \end{equation*}
    then there exists a tuple of non-negative integers $\nu=(\nu_1,\nu_2,\dots,\nu_{n-2})\in\mathbb{N}_0^{n-2}$ and monodromy maps $$\sigma_\nu^M,\sigma_\nu^N\colon\mathop{\natural}_{i=1}^{n-2}\mathop{\natural}_{\nu_i}{(S^{i}\times D^{n-1-i})}\rightarrow\mathop{\natural}_{i=1}^{n-2}\mathop{\natural}_{\nu_i}{(S^{i}\times D^{n-1-i})}$$ such that 
    $$(\mathop{\natural}_{i=1}^{n-2}\mathop{\natural}_{\nu_i}(S^{i}\times D^{n-1-i}),\sigma_\nu^M)\text{ and }(\mathop{\natural}_{i=1}^{n-2}\mathop{\natural}_{\nu_i}(S^{i}\times D^{n-1-i}),\sigma_\nu^N)$$ are open book decompositions on $X$ and $Y$, respectively. In particular, the open book decompositions on $X$ and $Y$ with trivial monodromy can be stabilized to open book decompositions with a common page.
\end{corollary}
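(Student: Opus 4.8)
The plan is to derive Corollary~\ref{cor common page} from Theorem~\ref{thm generalization} by matching handle data through the Euler characteristic. First I would recall that a closed orientable $(n-1)$-manifold with non-empty boundary (the page) always admits a handle decomposition with a single $0$-handle and no $(n-1)$-handles, and by turning the decomposition upside down and using handle trading/cancellation one may further assume it has the ``economical'' shape required by Theorem~\ref{thm generalization}: one $0$-handle, $\mu_1$ $1$-handles, and $\mu_i$ $(n-i)$-handles for $i=2,\dots,n-2$. Applying Theorem~\ref{thm generalization} to $(M,\operatorname{id}_M)$ with such a tuple $\mu$ gives an open book on $X$ with page $P_\mu:=\mathop{\natural}_{i=1}^{n-2}\mathop{\natural}_{\mu_i}(S^i\times D^{n-1-i})$, and similarly a tuple $\lambda$ for $N$ with page $P_\lambda$. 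The goal is then to enlarge $\mu$ and $\lambda$ to a common tuple $\nu$; since adding an $(S^i\times D^{n-1-i})$-summand to $M$ (respectively to the page) amounts to a connected sum / handle attachment that can be absorbed after a further stabilization, I would show that one may freely increase any $\nu_i$ at the cost of simultaneously increasing $\nu_{n-1-i}$ by the same amount (a cancelling handle pair), and hence without changing the underlying manifold $X$.

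The combinatorial heart is the Euler-characteristic bookkeeping. For $P_\nu$ one computes
\begin{equation*}
\chi(P_\nu)=1+\sum_{i=1}^{n-2}(-1)^i\nu_i ,
\end{equation*}
and the handle decomposition of $M$ giving the tuple $\mu$ yields $\chi(M)=1-\mu_1+\sum_{i=2}^{n-2}(-1)^{\,n-i}\mu_i$. The parity or exact value of $\chi(M)-\chi(N)$ in the hypothesis is precisely what is needed to solve, over $\mathbb{N}_0$, the system expressing ``$P_\nu$ is reachable from both $P_\mu$ and $P_\lambda$ by adding cancelling $(i,\,n-1-i)$-handle pairs.'' When $n-1$ is odd, a cancelling pair $(i,n-1-i)$ has $i\not\equiv n-1-i \pmod 2$, so each such pair changes $\chi$ by $(-1)^i+(-1)^{n-1-i}=0$; thus $\chi$ is a strict invariant and we need $\chi(M)=\chi(N)$ on the nose, matching the stated hypothesis. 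When $n-1$ is even, a middle-dimensional cancelling pair $(\tfrac{n-1}{2},\tfrac{n-1}{2})$ changes $\chi$ by $2(-1)^{(n-1)/2}=\pm2$, so $\chi$ is only a $\bmod\,2$ invariant, again matching the hypothesis; one adds enough middle-dimensional summands to equalize. In either case I would exhibit an explicit $\nu$ dominating both $\mu$ and $\lambda$ coordinatewise after these moves, and invoke Theorem~\ref{thm generalization} once more (or a stabilization argument in its proof) to realize the open books on $X$ and $Y$ with the common page $P_\nu$.

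The last sentence of the corollary is then immediate: the monodromy maps $\sigma_\nu^M$ and $\sigma_\nu^N$ produced above are, by construction, the monodromies of open books on $X$ and $Y$ respectively with the identical page $P_\nu$, so the trivial-monodromy open books on $X$ and $Y$ have been stabilized to a common page.

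The main obstacle I anticipate is purely the careful handling of the handle-cancellation step: one must verify that increasing $\nu_i$ and $\nu_{n-1-i}$ simultaneously genuinely corresponds to attaching a cancelling handle pair to the page \emph{and} that Theorem~\ref{thm generalization}'s hypothesis (existence of an economical handle decomposition of the page with exactly the prescribed handle counts) is preserved under this enlargement — i.e.\ that a boundary connected sum $P_\nu$ still admits a handle decomposition of the required shape whenever $\nu$ arises from $\mu$ this way. This is where the low-index constraint ``one $0$-handle, $\mu_1$ $1$-handles, $\mu_i$ $(n-i)$-handles'' must be checked to survive, and edge cases for small $n$ (e.g.\ $n=4$, where the range $i=1,\dots,n-2=2$ collapses and the middle dimension coincides with $i=1$) should be spelled out separately.
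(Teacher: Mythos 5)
The overall strategy is right and the Euler-characteristic bookkeeping is correct, but there is a genuine gap in the reduction to a common tuple~$\nu$. You attempt to reach a common $\nu$ from both $\mu$ (for $X$) and $\lambda$ (for $Y$) using only the moves from Theorem~\ref{thm stabilization of open books}, each of which increments the \emph{complementary} pair $(\nu_i,\nu_{n-1-i})$ by $(1,1)$ (the $k$-stabilization adds an $S^{k-1}\times D^{n-k}$- and an $S^{n-k}\times D^{k-1}$-summand, and $(k-1)+(n-k)=n-1$). Such moves force the relation $\mu_i-\lambda_i=\mu_{n-1-i}-\lambda_{n-1-i}$ for every $i$, which the Euler-characteristic hypothesis does \emph{not} guarantee. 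Concretely, take $n=6$, $\mu=(0,0,1,0)$ and $\lambda=(1,0,0,0)$: both give $\chi(P_\mu)=\chi(P_\lambda)=0$, and both arise as outputs of Theorem~\ref{thm generalization} (e.g.\ $M$ with a handle decomposition in $\mathcal{H}(0,0,1,0)$ and $N=S^1\times D^4$ with one in $\mathcal{H}(1,0,0,0)$), yet the two systems $(a,b,1+b,a)$ and $(1+c,d,d,c)$ have no common solution over $\mathbb{N}_0$ — one would need $a=c$ and $a=1+c$ simultaneously.

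The missing tool is precisely the paper's Lemma~\ref{lemma euler char}: before applying Theorem~\ref{thm generalization}, insert canceling pairs of $j$- and \handles{(j+1)} into the chosen handle decompositions of $M$ and $N$ so that their handle counts agree in every index. Each such insertion increments the \emph{adjacent} pair $(\mu_{n-j},\mu_{n-j-1})$, which is exactly the move you are missing; combined with the complementary moves from stabilization it makes the system solvable under your stated Euler-characteristic condition. With Lemma~\ref{lemma euler char} in hand, the odd-$(n-1)$ case needs only one application of Theorem~\ref{thm generalization} and no further stabilization, and the even-$(n-1)$ case uses stabilizations only to fix the Euler characteristics modulo two before the matching. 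Your computation of $\chi(P_\nu)=1+\sum_i(-1)^i\nu_i$ and the parity analysis of the two moves are correct; the fix is simply to add the handle-matching step on $M$ and $N$ rather than trying to reconcile arbitrary $\mu$ and $\lambda$ after the fact.
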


%Let $X$ and $Y$ be two connected, smooth, compact, oriented manifolds with non-empty, then their boundary connected sum $X \mathop{\natural} Y$ is an oriented and connected manifold obtained from $X$ and $Y$ by attaching an \handle{1} to the boundary of $X$ and $Y$. The diffeomorphism type of $M\mathop{\natural} (S^{k-1}\times D^{n-k})\mathop{\natural} (S^{n-k}\times D^{k-1})$ is determined once we specify in Definition~\ref{def exchanged page} the boundary component of the page $M$ along which we sum, if $M$ has multiple connected boundary components. 

\subsection*{Convention}
Unless otherwise stated, manifolds are connected, compact, oriented, and smooth, and the maps between them are smooth. We denote the manifold $X$ equipped with the opposite orientation by $\overline{X}$, and write $\cong$ to indicate the existence of an orientation-preserving diffeomorphism between smooth manifolds. Finally, $M \mathop{\natural} N$ denotes the boundary connected sum of two manifolds $M$ and $N$ with non-empty boundary, which is an oriented, connected manifold obtained by attaching an \handle{1} to the boundary of $M$ and $N$, and $\#$ denotes connected sum.

\subsection*{Acknowledgment}
I am grateful to my advisor Marc Kegel for his careful guidance throughout the development of this work. I am indebted to Livio Ferretti, Shital Lawande, Gheehyun Nahm, Qiuyu Ren, Kuldeep Saha, Felix Schm\"aschke, David Suchodoll, and Alison Tatsuoka for valuable discussions. I also thank the University of Heidelberg for providing a productive environment during part of the writing process.

The author is supported by the Claussen-Simon-Stiftung and is a member of the Berlin Mathematics Research Center MATH+ (EXC-2046/1, project ID: 390685689), funded by the Deutsche Forschungsgemeinschaft (DFG) under Germany’s Excellence Strategy.

\section{Preliminaries}
The main goal of this section is to prepare the readers for Definition~\ref{def induced handle decomposition}, which will be introduced in Section~\ref{sec handle decomposition}.
\subsection{Handle decompositions}
We begin by revisiting the definition and key properties of handle decompositions. Let $D^k$ and $S^{k-1}$ denote the $k$-dimensional disk and its boundary, the $(k-1)$-dimensional sphere, respectively.
\begin{definition}
    For integers $0\leq k\leq m$, an $m$-dimensional \textit{$k$-handle $\operatorname{h}^k$} of \textit{index} $k$ is a copy of $D^{k}\times D^{m-k}$ attached to an $m$-manifold $X$ along $\partial D^k\times D^{m-k}$ via an embedding
    \[
    \phi^k\colon\partial D^k\times D^{m-k}\rightarrow \partial X.
    \]
    The result of attaching $\operatorname{h}^k$ to $X$ via $\phi^k$ is denoted by $X\cup_{\phi^k} \operatorname{h}^k$. $D^k \times0$ and $0\times D^{n-k}$ are called the \textit{core} $c(\operatorname{h}^k)$ and \textit{cocore} $coc(\operatorname{h}^k)$, respectively. $\partial D^k \times 0$ (and its image $\phi^k(\partial D^k \times 0)$) and $0 \times \partial D^{n-k}$ are called the \textit{attaching sphere} $a(\operatorname{h}^k)$ and \textit{belt sphere} $b(\operatorname{h}^k)$, respectively. The domain (and its image) of $\phi^k$ is called the \textit{attaching region}.
\end{definition}
%$\partial D^k \times D^{n−k}$ and its image $\phi(\partial D^k \times D^{n−k})$ the attaching region,

\begin{definition}
    Let $X$ be an $m$-manifold with boundary $\partial X=\partial_{+}X\sqcup\overline{\partial_{-}X}$.
    A \textit{relative handle decomposition} $\operatorname{h}$ on $(X,\partial_{-}X)$ is a filtration
    $$[0,1]\times\partial_{-}X=X_{-1}\subseteq X_0\subseteq X_1\subseteq \dots\subseteq X_L=X$$
    of finite length $L$ with the property: for all $k>-1$ there are $\mu_k\geq0$ \textit{attaching maps} $$\phi_j^{k}\colon \partial D^k\times D^{m-k}\rightarrow \partial X_{k-1}, j\in\{1,\dots,\mu_k\},$$ such that
        $$X_k\cong X_{k-1}\cup_{\phi^{k}_1 }\operatorname{h}^k_1\cup_{\phi^{k}_2} \operatorname{h}^k_2 \dots\cup_{\phi^{k}_{\mu_k}} \operatorname{h}^k_{\mu_k}.$$
    A \textit{handle decomposition} on $X$ is a relative handle decomposition on $(X,\emptyset)$.
\end{definition}

One can always assume that handles are attached in order of non-decreasing indices. Furthermore, an $m$-dimensional manifold with a non-empty boundary always admits a handle decomposition with exactly one \handle{0} and no \handles{m}~\cite[Propositions~4.2.7 and~4.2.13]{gs}.
%Throughout this paper, we only consider such handle decompositions.

\begin{proposition}\label{dual handle decomposition}
    A handle decomposition on $X$ induces a \textit{dual} handle decomposition, a relative handle decomposition on $(X,\overline{\partial X})$. 
\end{proposition}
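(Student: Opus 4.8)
The plan is to realize the handle decomposition by a Morse function and then reverse the direction of that function; the dual decomposition is precisely what one reads off from the reversed one. Recall that a handle decomposition
$$\emptyset = X_{-1}\subseteq X_0\subseteq X_1\subseteq\dots\subseteq X_L=X$$
of an $m$-manifold $X$, with $X_k$ obtained from $X_{k-1}$ by attaching the index-$k$ handles, is realized by a Morse function $f\colon X\to\mathbb{R}$ that has no critical points on $\partial X$, has exactly one critical point of index $k$ for each index-$k$ handle, is arranged in order of increasing index so that each $X_k$ is (diffeomorphic to) a sublevel set of $f$, and --- when $\partial X\neq\emptyset$, which we may assume comes with no $m$-handle --- satisfies $f^{-1}(\max f)=\partial X$. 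Under this correspondence the descending manifold of an index-$k$ critical point recovers, up to isotopy, the core $c(\operatorname{h}^k)$ with boundary the attaching sphere $a(\operatorname{h}^k)$, and the ascending manifold recovers the cocore $coc(\operatorname{h}^k)$ with boundary the belt sphere $b(\operatorname{h}^k)$. (If $X$ is closed, the $m$-handles are interior maxima of $f$.)

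Now replace $f$ by $g\defeq c-f$ for a constant $c>\max f$. The critical points of $g$ are those of $f$, but a critical point of index $k$ for $f$ has index $m-k$ for $g$, since the Hessian changes sign; moreover the ascending and descending manifolds are interchanged. Reading off the handle decomposition determined by $g$ therefore gives a filtration
$$\partial X\times[0,1]\subseteq\dots\subseteq X$$
(the empty set when $X$ is closed, where the dual decomposition is again absolute) in which each original index-$k$ handle $\operatorname{h}^k$ is replaced by an index-$(m-k)$ handle whose core, attaching sphere, cocore, and belt sphere are, respectively, $coc(\operatorname{h}^k)$, $b(\operatorname{h}^k)$, $c(\operatorname{h}^k)$, and $a(\operatorname{h}^k)$. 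This is a \emph{relative} handle decomposition, with $\partial X$ as the incoming boundary; it is relative to $(X,\overline{\partial X})$ rather than to $(X,\partial X)$ because, under the convention $\partial W=\partial_{+}W\sqcup\overline{\partial_{-}W}$ for a cobordism $W$, the incoming boundary carries the orientation opposite to the one it inherits from the ambient manifold, and $\partial X$ appeared with its ambient orientation as the outgoing boundary of the original decomposition.

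I do not expect any genuine obstacle: this is the classical ``upside-down'' construction, and the work is bookkeeping --- making precise (i) the dictionary between handle decompositions and Morse functions, so that reversing $f$ legitimately produces attaching maps of the correct index with the correct attaching regions, and (ii) the orientation convention responsible for the bar over $\partial X$. One can equally bypass Morse theory: if $X'=X\cup_{\phi^k}\operatorname{h}^k$ with $\operatorname{h}^k=D^k\times D^{m-k}$, then the cobordism $\overline{X'\setminus X}\cong D^k\times D^{m-k}$ meets $\partial X'$ exactly in $D^k\times\partial D^{m-k}\cong\partial D^{m-k}\times D^k$, exhibiting it as an $(m-k)$-handle attached to $\partial X'$; inducting down the filtration $X_{-1}\subseteq\dots\subseteq X_L$, reversing the order of attachment while dualizing each handle and checking that the reversed attaching maps are still embeddings, yields the same dual decomposition.
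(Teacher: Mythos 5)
Your proposal is correct, and it covers both the paper's argument and an alternative. The paper's (outlined) proof is precisely the "direct" argument you mention at the end: reverse the order of handle attachment and reinterpret each $D^k\times D^{m-k}$ as an $(m-k)$-handle $D^{m-k}\times D^{k}$ attached along what was $D^k\times\partial D^{m-k}$, building up from $[0,1]\times\partial X$. Your primary route, realizing the decomposition by a Morse function $f$ and replacing it with $c-f$ so that index-$k$ critical points become index-$(m-k)$ and ascending/descending manifolds swap, is a genuinely different way to organize the same bookkeeping. The Morse-theoretic version makes it automatic that the reversed filtration is a legitimate nested sequence of sublevel sets with embedded attaching maps, at the cost of invoking the handle/Morse dictionary; the direct version (the paper's, following Gompf--Stipsicz) is more elementary and stays entirely in the language of handle attachments, but one then has to check by hand, inductively, that the reversed attaching maps are embeddings into the new upper boundary. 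Your explanation of the orientation convention, accounting for the bar in $(X,\overline{\partial X})$ via the cobordism convention $\partial W = \partial_+ W\sqcup\overline{\partial_- W}$, is also the correct reading of the paper's definition of relative handle decomposition.
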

\begin{proof}[Proof outline]
    We sketch an argument presented in~\cite{gs}. Given a handle decomposition on $X$, we obtain a filtration starting with $[0,1]\times \partial X$ by reversing the order of handle attachment and interpreting each \handle{k} $D^k\times D^{m-k}$ as an \handle{(m-k)} $D^{m-k}\times D^{k}$, for all $k=1,\dots,m-1$.
\end{proof}

\begin{definition}
    Given an $m$-manifold with boundary $\partial X$ and an orientation-reversing diffeomorphism $\varphi\colon \partial X\rightarrow \overline{\partial X}$, let $D_\varphi X$ denote a closed $m$-manifold, oriented with the orientation matching $X$, obtained by gluing $X$ and $\overline{X}$ with $\varphi$. If $\varphi=\operatorname{id}$ is the identity, we omit the gluing map from the notation.
\end{definition}

For example, if $X$ is the closed unit interval, then its double $D X$, obtained by gluing two copies of $[0,1]$ along their boundaries, is diffeomorphic to $S^1$.

\begin{proposition}\label{handle decomposition on double}
    A handle decomposition on $X$ induces a natural handle decomposition on $D_\varphi X$.
\end{proposition}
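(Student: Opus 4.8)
The plan is to take the handle decomposition on $X$ and the dual handle decomposition on $(X, \overline{\partial X})$ from Proposition~\ref{dual handle decomposition}, then glue them along $\varphi$ to build a filtration of $D_\varphi X$. Concretely, write $D_\varphi X = X \cup_\varphi \overline{X}$. I would first lay down the given handle decomposition $\emptyset = X_{-1} \subseteq X_0 \subseteq \dots \subseteq X_L = X$ on the ``bottom'' copy $X$, so that after attaching all handles of $X$ we have reached the submanifold $X \subseteq D_\varphi X$ whose boundary is $\partial X$. Then, using the orientation-reversing diffeomorphism $\varphi \colon \partial X \to \overline{\partial X}$ to identify $\partial X$ with $\overline{\partial X} = \partial_{-}(\overline{X})$, I would continue the filtration by attaching the handles of the dual decomposition of $\overline{X}$ relative to $\overline{\partial X}$. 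Since the dual decomposition is a \emph{relative} handle decomposition on $(\overline{X}, \overline{\partial X})$, starting from the collar $[0,1] \times \overline{\partial X}$, its handles can be attached on top of $\partial X$ (glued via $\varphi$) and the union of the two filtrations exhausts all of $X \cup_\varphi \overline{X} = D_\varphi X$.

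The key steps, in order, are: (1) record the handle decomposition $\{X_k\}$ on $X$; (2) invoke Proposition~\ref{dual handle decomposition} to get the dual relative handle decomposition on $(\overline{X}, \overline{\partial X})$, say with pieces $[0,1]\times \overline{\partial X} = Y_{-1} \subseteq Y_0 \subseteq \dots \subseteq Y_{L'} = \overline{X}$; (3) observe that $\varphi$ identifies the top boundary $\partial X$ of the filtration on $X$ with the bottom piece $\{0\} \times \overline{\partial X}$ (up to the collar $[0,1]\times \overline{\partial X}$, which is absorbed by a collar of $\partial X$ inside $X$), so the concatenation
$$\emptyset = X_{-1} \subseteq \dots \subseteq X_L = X = X \cup_\varphi (Y_{-1}) \subseteq X\cup_\varphi Y_0 \subseteq \dots \subseteq X \cup_\varphi Y_{L'} = D_\varphi X$$
is a valid filtration by handle attachments; (4) conclude this is a handle decomposition of the closed manifold $D_\varphi X$, noting there is no $\partial_{-}$ part since we started from $\emptyset$. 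One should also remark that this is ``natural'' in the sense that the inclusion $X \hookrightarrow D_\varphi X$ is subordinate to the handle filtration, and that an index-$k$ handle of $X$ remains an index-$k$ handle while an index-$k$ handle of the original decomposition contributes (via the dual decomposition of the second copy) an index-$(m-k)$ handle.

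The main obstacle -- really the only subtlety -- is the collar matching at the seam: the dual decomposition as defined starts from $[0,1] \times \overline{\partial X}$ rather than from $\overline{\partial X}$ itself, so I need to check that attaching this collar to $X$ along $\varphi \colon \partial X \to \overline{\partial X}$ does not change the diffeomorphism type, i.e. that $X \cup_\varphi ([0,1]\times \overline{\partial X})$ is diffeomorphic to $X$. This is immediate from the existence of a collar neighborhood of $\partial X$ in $X$ (collar uniqueness), so the attached $[0,1]\times \overline{\partial X}$ simply enlarges the collar. After this identification, the attaching maps $\phi_j^k$ of the dual handles, originally landing in $\partial Y_{k-1}$, land in the corresponding boundary level inside $D_\varphi X$ exactly as required by the definition of a relative handle decomposition, and the two filtrations splice together with no further work. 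I would present this as a short proof outline rather than a detailed construction, in the same spirit as the proof of Proposition~\ref{dual handle decomposition}.
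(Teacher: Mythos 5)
Your proposal is correct and follows essentially the same approach as the paper: take the given handle decomposition on the first copy of $X$, take the dual relative handle decomposition on the second copy, and splice them along the seam via $\varphi$, absorbing the collar $[0,1]\times\overline{\partial X}$ into a collar neighborhood of $\partial X$ in $X$. The paper phrases the gluing identification as $\operatorname{id}_{[0,1]}\times\varphi$ on the collar, but this is the same step you describe when you address the collar-matching subtlety.
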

\begin{proof}
    A handle decomposition on $D_{\varphi}X$ is obtained by adding handles to $X$ along $\partial X$. Given an orientation-preserving diffeomorphism $\varphi\colon\partial X\rightarrow\partial X$, consider $\varphi\colon\partial X\rightarrow \overline{\partial X}$ as an orientation-reversing diffeomorphism by reversing the orientation of the target. Pick a collar neighborhood $[0, 1]\times\partial X$ and identify it with $X_{-1}=[0,1]\times \overline{\partial X}$ of the dual handle decomposition on $(X,\overline{\partial X})$ using $\operatorname{id}_{[0,1]}\times \varphi$.
\end{proof}

\subsection{Half open books}
As an intermediate step in constructing a handle decomposition on an open book, we recall the notion of half open books~\cite{hsueh2023kirby}. By applying Proposition~\ref{handle decomposition on double}, we obtain a handle decomposition for the open book from a handle decomposition on the half open book.

Let $M$ be a \textit{page}, an $(n-1)$-manifold with a non-empty boundary $\partial M$, and let $\varphi$ be a \textit{monodromy map} on $M$, a self-diffeomorphism that fixes $\partial M$ pointwise. Consider the following two equivalence relations on the $n$-manifold $M\times[a,b]$, where $a<b\in\mathbb{R}$:
\begin{enumerate}
    \item\label{0.5} $(x,t)\sim_1(x,t') \text{ for all } x\in \partial M, t,t'\in [a,b]$ and
    \item\label{1} $(x,b)\sim_2(\varphi(x),a)\text{ for all } x\in M.$
\end{enumerate}

An $n$-manifold is an \textit{open book} $\operatorname{Ob}(M,\varphi)$ with page $M$ and monodromy $\varphi$, if it is diffeomorphic to the quotient $M\times [0,1]/(\sim_{\ref{0.5}}\text{and}\sim_{\ref{1}})$ by the equivalence relations~\ref{0.5} and~\ref{1} above.

\begin{definition}
    An $n$-manifold is a \textit{half open book} $\operatorname{hob}(M)$ with page $M$, if it is diffeomorphic to the quotient $M\times [0,\frac{1}{2}]/\sim_{\ref{0.5}}$ by the equivalence relation~\ref{0.5} above. $M\times \{0\},M\times \{\frac{1}{2}\} \subset \partial\operatorname{hob}(M)$ are called the \textit{front} and \textit{back cover} of the half open book, respectively.
\end{definition}

\begin{example}
    The half open book with page $[0,1]$, as depicted in Figure~\ref{fig:hob[0,1]}, is obtained by identifying $(0,0)\in [0,1]\times [0,\frac{1}{2}]$ with $(0,t)$ for all $t\in[0,\frac{1}{2}]$ and identifying $(1,0)\in [0,1]\times [0,\frac{1}{2}]$ with $(1,t)$ for all $t\in[0,\frac{1}{2}]$. $\operatorname{Hob}([0,1])$ is diffeomorphic to $D^2$ and its boundary $S^1$ can be seen as two intervals identified along the boundary.
\end{example}
\begin{figure}[ht]
    \centering
    \includegraphics[width=0.85\textwidth]{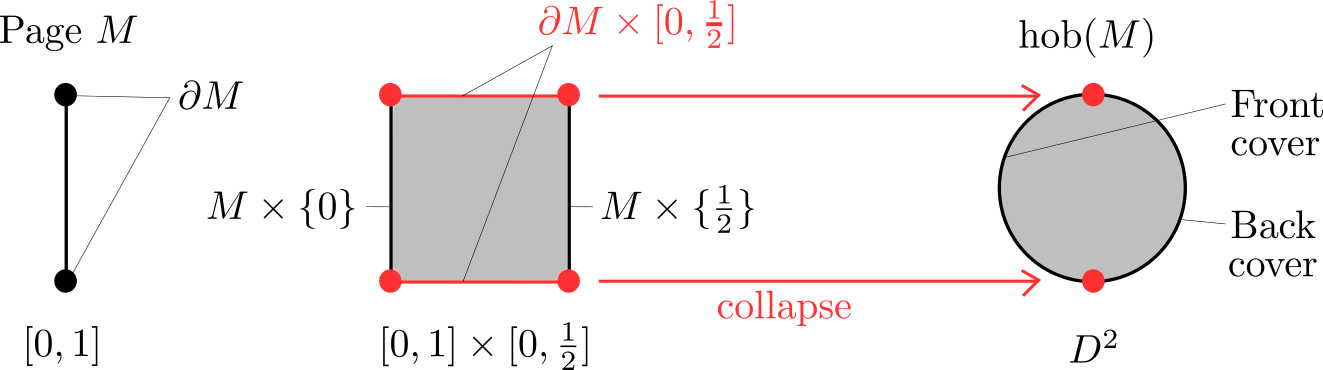}
    \caption{Constructing the half open book with page $[0,1]$.}
    \label{fig:hob[0,1]}
\end{figure}

Observe that the boundary of a half open book is given by the double $D M$ of the page $M$. More specifically, it is a closed, oriented manifold obtained from the front cover $M\times\{0\}$ by gluing the back cover $M\times\{\frac{1}{2}\}$ to it using the identity map along $\partial M$.

\begin{lemma}{\cite[Proposition~3.6]{hsueh2023kirby}}\label{lemma handle decomposition on hob}
     A handle decomposition on $M$ induces a natural handle decomposition on $\operatorname{hob}(M)$.
\end{lemma}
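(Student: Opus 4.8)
**Proof proposal for Lemma~\ref{lemma handle decomposition on hob} (induced handle decomposition on $\operatorname{hob}(M)$).**

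The plan is to build the handle decomposition on $\operatorname{hob}(M)=M\times[0,\tfrac12]/\!\sim_{\ref{0.5}}$ by thickening a given handle decomposition of $M$ in the $[0,\tfrac12]$-direction and recording what happens to the extra disk factor under the boundary collapse $\sim_{\ref{0.5}}$. Concretely, start with a handle decomposition $M_{-1}\subseteq M_0\subseteq\dots\subseteq M_L=M$ of the $(n-1)$-manifold $M$, which we may take to have a single $0$-handle and no $(n-1)$-handles. First I would observe that $\operatorname{hob}$ is functorial enough that $\operatorname{hob}(M_k)\subseteq\operatorname{hob}(M_{k+1})$ gives a filtration of $\operatorname{hob}(M)$, so it suffices to understand how attaching a single $(n-1)$-dimensional $k$-handle $D^k\times D^{n-1-k}$ to the page changes the half open book. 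The key geometric point: $\operatorname{hob}(D^k\times D^{n-1-k})$, where the collapse $\sim_{\ref{0.5}}$ only affects the part of the boundary coming from $\partial M$, is itself an $n$-dimensional $k$-handle $D^k\times D^{n-k}$, where the new disk factor $D^{n-k}=D^{n-1-k}\times[0,\tfrac12]$ is the old cocore direction crossed with the half-open-book interval (no collapse happens in the $D^k$ core direction since the core is disjoint from $\partial M$). So each $k$-handle of $M$ contributes exactly one $k$-handle of $\operatorname{hob}(M)$, and the attaching region of the new handle is determined by the old attaching region together with the collar structure of the half open book along $M_{k-1}$.

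The main steps, in order: (1) fix notation and a collar $[0,1]\times\partial M$ so that the relation $\sim_{\ref{0.5}}$ is transparent near the binding; (2) prove the base case $\operatorname{hob}(D^{n-1})\cong D^n$, matching up front cover, back cover, and the collapsed binding portion with the standard decomposition $\partial D^n = D^{n-1}\cup_{\partial}D^{n-1}$ — this is really the $\operatorname{Hob}([0,1])\cong D^2$ example generalized; (3) the inductive step, where I attach $\operatorname{h}^k$ to $M_{k-1}$ and check that $\operatorname{hob}(M_{k-1}\cup_{\phi^k}\operatorname{h}^k)\cong \operatorname{hob}(M_{k-1})\cup_{\widehat{\phi^k}}\widehat{\operatorname{h}^k}$, where $\widehat{\operatorname{h}^k}$ is the $n$-dimensional $k$-handle above and $\widehat{\phi^k}\colon \partial D^k\times D^{n-k}\to \partial\operatorname{hob}(M_{k-1})$ is $\phi^k$ crossed with (and suitably reparametrized along) the interval direction; (4) conclude that the resulting filtration is a genuine handle decomposition, i.e. that $\operatorname{hob}(M)_{-1}=\varnothing$ is fine since $\operatorname{hob}(M)$ is a closed-off-ended manifold with boundary $DM$, and that the indices are non-decreasing because they agree with those of the decomposition on $M$.

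The part I expect to be the main obstacle is step (3), specifically checking that the attaching map $\widehat{\phi^k}$ genuinely lands in $\partial\operatorname{hob}(M_{k-1})$ and is an embedding with the right domain $\partial D^k\times D^{n-k}$, rather than something only diffeomorphic to it after a corner-rounding. The subtlety is that $\partial\operatorname{hob}(M_{k-1})$ is the double $D M_{k-1}$, so the attaching sphere $\phi^k(\partial D^k\times 0)\subset\partial M_{k-1}$ has to be placed in $D M_{k-1}$ — one must decide whether it goes into the front cover copy, the back cover copy, or straddles them, and verify the normal $D^{n-k}$ bundle coordinates are the product of the original $D^{n-1-k}$ framing with the interval. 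Since $\varphi=\operatorname{id}$ is not in play here (this is the half open book, not the open book), the attaching sphere can be taken into, say, the front cover $M_{k-1}\times\{0\}$ with its original framing together with the trivial $[0,\tfrac12]$-direction, and the corner between the handle and the collar region gets smoothed in the standard way. Once this bookkeeping is done carefully for one handle, the induction runs formally, and I would cite \cite[Propositions~4.2.7 and~4.2.13]{gs} for the existence of the starting decomposition on $M$ and the standard corner-smoothing lemmas without reproving them.
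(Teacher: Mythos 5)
Your approach matches the paper's own proof outline: thicken the page's handle decomposition by $[0,\tfrac12]$, use the key observation that $D^{n-1-k}\times[0,\tfrac12]$ with its free boundary collapsed via $\sim_1$ becomes $D^{n-k}$, so that each $(n-1)$-dimensional \handle{k} of $M$ yields an $n$-dimensional \handle{k} of $\operatorname{hob}(M)$, and run the filtration $\operatorname{hob}(M_{-1})\subseteq\operatorname{hob}(M_0)\subseteq\dots$ inductively. The identification of the base case $\operatorname{hob}(D^{n-1})\cong D^n$ and the observation that the collapse only touches the cocore direction are both exactly what the paper records (cf.\ Figure~\ref{fig:handle_decomposition_on_hob}).

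One caution on the side remark in step (3): you suggest isotoping the attaching sphere of the induced handle $\widehat{\operatorname{h}^k}$ into the front cover $M_{k-1}\times\{0\}$. The \emph{natural} picture — and the one Definition~\ref{def induced handle decomposition} and Lemma~\ref{lemma attaching region} tacitly rely on — keeps the attaching sphere on the equator $\partial M_{k-1}\subset D M_{k-1}=\partial\operatorname{hob}(M_{k-1})$, with the attaching region $\phi^k(\partial D^k\times D^{n-1-k})\times[0,\tfrac12]/\!\sim$ straddling the front and back covers symmetrically. Pushing everything into one cover still produces \emph{a} handle decomposition (so it does not break this lemma), but it is not the decomposition for which the belt sphere of $\mathbf{h}_j^k$ is the union of the two cocores of $\operatorname{h}_j^k$ in the two covers, which is the form Lemma~\ref{lemma attaching region} asserts and the exchange-move machinery uses. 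If you intend your proof to feed into the rest of the paper, keep the symmetric placement rather than favoring one cover.
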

\begin{proof}[Proof outline]
    Starting with a handle decomposition on $M$, one constructs a handle decomposition on $M \times [0, \frac{1}{2}]$. Each $(n-1)$-dimensional \handle{k} of $M$ induces an $n$-dimensional \handle{k} in $M \times [0, \frac{1}{2}]$. By an inductive argument, the quotient by $\sim_{\ref{0.5}}$ preserves the handle decomposition.
\end{proof}

The key observation is that $D^{(n-1)-k}\times [0,\frac{1}{2}]/\sim_{\ref{0.5}}$ is diffeomorphic to $D^{n-k}$; see Figure~\ref{fig:handle_decomposition_on_hob} for an example. 
\begin{figure}[ht]
    \centering
    \includegraphics[width=0.7\textwidth]{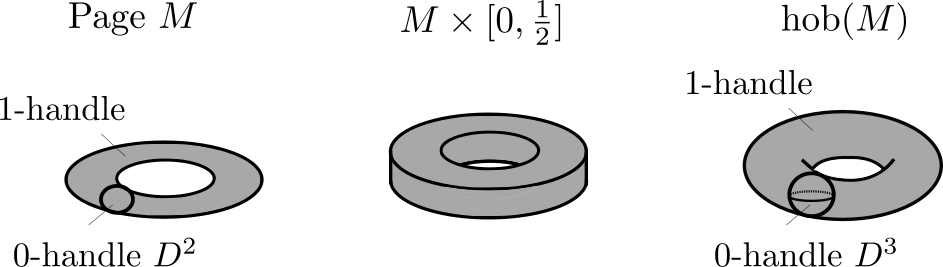}
    \caption{The solid torus is obtained from the annulus times half-interval by collapsing $\{p\}\times[0,\frac{1}{2}]$ to a point for all points $p$ in the boundary of the annulus.}
    \label{fig:handle_decomposition_on_hob}
\end{figure}

\section{A symmetric handle decomposition on open books}\label{sec handle decomposition}
This section aims to construct a handle decomposition on an open book $\operatorname{Ob}(M,\varphi)$ from a handle decomposition on the page $M$.

\begin{definition}
    Given a monodromy map $\varphi \colon M \rightarrow M$ on a page $M$, we extend it to a map ${\varphi}^+ \colon D M \rightarrow D M$ on the double by applying the identity map on the second copy of $M$.
\end{definition}

In particular, the open book $\operatorname{Ob}(M,\varphi)$ is diffeomorphic to the result $D_{\varphi^{+}}\operatorname{hob}(M)$ of gluing two half open books.

% \begin{algorithm}
%     Given a handle decomposition $\operatorname{h}$ on $M$, we obtain a handle decomposition on $\operatorname{Ob}(M,\varphi)$ as follows:
% \begin{enumerate}
%     \item Use Lemma~\ref{lemma handle decomposition on hob} to obtain a handle decomposition $\mathbf{h}$ on $\operatorname{hob}(M)$.

%     \item Apply Proposition~\ref{dual handle decomposition} to the handle decomposition obtained in the previous step to obtain a dual handle decomposition $\mathbf{h}^{*}$ on $(\operatorname{hob}(M),\overline{D M})$.
    
%     \item Pick a collar neighborhood $[0, 1]\times\partial\operatorname{hob}(M)$ of $\operatorname{hob}(M)$ and identify it with $\operatorname{hob}(M)_{-1}=[0,1]\times \overline{\partial \operatorname{hob}(M)}\subset (\operatorname{hob}(M),\overline{D M})$ using $\operatorname{id}_{[0,1]}\times \varphi^+$.
% \end{enumerate}
% \end{algorithm}
% \begin{proof}
%     Glue to the back cover of $M\times [0,\frac{1}{2}]/\sim_{\ref{0.5}}$ the front cover of $M\times [\frac{1}{2},1]/\sim_{\ref{0.5}}$ with the identity map, we get $M\times [0,1]/\sim_{\ref{0.5}}$. Gluing the back cover of $M\times [\frac{1}{2},1]/\sim_{\ref{0.5}}$ to the front cover of $M\times [0,\frac{1}{2}]/\sim_{\ref{0.5}}$ using $\varphi$ is equivalent to identifying $(x,1)$ with $(\varphi(x),0)$ for all $(x,1)\in M\times [0,1]/\sim_{\ref{0.5}}$, which gives $M\times[0,1]/(\sim_{\ref{0.5}}\text{ and }\sim_{\ref{1}})\cong\operatorname{Ob}(M,\varphi)$.
% \end{proof}

\begin{definition}\label{def induced handle decomposition}
    A handle decomposition on $\operatorname{Ob}(M,\varphi)$ is said to be \textit{induced} by a handle decomposition $\operatorname{h}$ on $M$ and denoted by $\mathbf{h}\cup_{\varphi}\mathbf{h}^*$, if it is obtained as follows.
\begin{enumerate}
    \item Use Lemma~\ref{lemma handle decomposition on hob} to obtain a handle decomposition $\mathbf{h}$ on $\operatorname{hob}(M)$ from $\operatorname{h}$.

    \item Apply Proposition~\ref{dual handle decomposition} to the handle decomposition obtained in the previous step to obtain a dual handle decomposition $\mathbf{h}^{*}$ on $(\operatorname{hob}(M),\overline{D M})$.
    
    \item Glue the above obtained handle decompositions together to give a handle decomposition on $D_{\varphi^{+}}\operatorname{hob}(M)$ as in Proposition~\ref{handle decomposition on double}.
\end{enumerate}
\end{definition}

\begin{definition}
    Let $M$ be an $(n-1)$-dimensional manifold with boundary.
    A handle decomposition on $M$ is an element of $\mathcal{H}{(\mu_1,\mu_2,\dots,\mu_{n-2})}$ if it consists of one \handle{0}, $\mu_1$ \handles{1}, $\dots$, $\mu_{n-2}$ \handles{(n-2)}, and no \handles{(n-1)} if its boundary is non-empty or one \handle{(n-1)} if its boundary is empty.
    
    Similarly, we say a relative handle decomposition on $(M,\partial M)$ is an element of $\mathcal{H}^*{(\lambda_1,\lambda_2,\dots,\lambda_{n-2})}$ if it consists of no \handle{0}, $\lambda_1$ \handles{1}, $\lambda_2$ \handles{2}, $\dots$, $\lambda_{n-2}$ \handles{(n-2)}, and one $\handle{($n-1$)}$.
\end{definition}

\begin{table}[ht]
    \centering

    \begin{tabular}{c|c l}

   Definition~\ref{def induced handle decomposition} & Handle decomposition &  on\\
    \hline
      Input  & $\operatorname{h}\in\mathcal{H}{(\mu_1,\mu_2,\dots,\mu_{n-2})}$ & $M$ \\
      
      Step~1 & $\mathbf{h}\in\mathcal{H}{(\mu_1,\mu_2,\dots,\mu_{n-2},0)}$ & $\operatorname{hob}(M)$ \\
      
      Step~2 & $\mathbf{h}^*\in\mathcal{H}^*{(0,\mu_{n-2},\dots,\mu_2,\mu_1)}$ & $(\operatorname{hob}(M),\overline{D M})$ \\

      Output  & $\mathbf{h}\cup_{\varphi}\mathbf{h}^*\in\mathcal{H}{(\mu_1,\mu_2+\mu_{n-2},\dots,\mu_{n-2}+\mu_2,\mu_1)}$ & $\operatorname{Ob}(M,\varphi)$
  
    \end{tabular}
    \caption{Number of handles of an induced handle decomposition.}
    \label{tab:number of handles}
\end{table}

\begin{remark}
Table~\ref{tab:number of handles} reflects the well known fact that the Euler characteristic of an open book is twice the Euler characteristic of its page if the page is odd-dimensional and zero otherwise. Let $M$ be an $(n-1)$-dimensional page with a handle decomposition $\operatorname{h}\in\mathcal{H}{(\mu_1,\mu_2,\dots,\mu_{n-2})}$. Since the Euler characteristic $\chi(\operatorname{Ob}(M,\varphi))$ is given by the alternating sum of the number of handles, therefore $\chi(\operatorname{Ob}(M,\varphi))=$
    \begin{equation*}
      \sum_{i=0}^{n-2}(-1)^{i}\mu_i+\sum_{i=0}^{n-2}(-1)^{n-i}\mu_i=
        \begin{cases}
          2\sum_{i=0}^{n-2}(-1)^{i}\mu_i=2\chi(M)  & \text{if $n-1$ is odd, and}\\
          0 & \text{if $n-1$ is even.}
        \end{cases}
    \end{equation*}
\end{remark}
%It is well known that the Euler characteristic of odd-dimensional closed manifolds vanishes. 

%an induced handle decomposition on an open book $\operatorname{Ob}(M,\varphi)$ induced by a decomposition on $M$ has the following nice properties. The union $coc(\operatorname{h}_j^{k})\cup \varphi(coc(\operatorname{h}_j^{k}))\subset D M=\partial\operatorname{hob}(M)$ of two copies of the cocore $coc(\operatorname{h}_j^k)\cong D^{(n-1)-k}$ is an $(n-1-k)$-dimensional sphere because $\varphi$ restricts to the identity map at the intersection $\partial M$ of the two book covers.

We clarify our notation before presenting the next lemma on induced handle decompositions. Given a handle decomposition $\operatorname{h}$, we assign an order to the handles and let $\operatorname{h}_j^k$ denote the $j$-th \handle{k}. Throughout this paper, $\operatorname{h}_j^k$ refers to an $(n-1)$-dimensional handle of a page, while $\mathbf{h}_j^k$ denotes an $n$-dimensional handle of a (half) open book. Let $\phi_j^k$ and $\Phi_j^k$ represent their attaching maps, respectively. We use a $^*$ to indicate the dual of a handle, and the attaching map of a dual handle ${\operatorname{h}_j^k}^*$ is denoted by ${\varphi_j^k}^*$.

\begin{lemma}\label{lemma attaching region}
    Let $\mathbf{h}\cup_{\varphi}\mathbf{h}^*$ be a handle decomposition on $\operatorname{Ob}(M,\varphi)$ induced by $\operatorname{h}$, then the attaching sphere $$a({\mathbf{h}_j^k}^{*})=coc(\operatorname{h}_j^k)\cup \varphi(coc(\operatorname{h}_j^k))\subset \partial \operatorname{hob}(M)=D M$$ is the union of the cocore $coc(\operatorname{h}_j^k)\subset M\times\{\frac{1}{2}\}$ of $\operatorname{h}_j^k$ in the back cover and its image $\varphi(coc(\operatorname{h}_j^k))\subset M\times\{0\}$ in the front cover. Moreover, the attaching region $${\Phi_j^k}^*(\partial D^{n-k}\times D^{k})=\operatorname{h}_j^k\cup\varphi(\operatorname{h}_j^k)\subset D M$$ is the union of $\operatorname{im}({\Phi_j^k}^*)\cap M\times\{\frac{1}{2}\}=\operatorname{h}_j^k$ in the back cover and $\varphi(\operatorname{h}_j^k)=\operatorname{im}({\Phi_j^k}^*)\cap M\times\{0\}$ in the front cover.
\end{lemma}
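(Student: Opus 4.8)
The plan is to unwind the three-step construction in Definition~\ref{def induced handle decomposition} and track precisely where each piece of a dual handle lands. First I would recall from the proof outline of Lemma~\ref{lemma handle decomposition on hob} that an $(n-1)$-dimensional \handle{k} $\operatorname{h}_j^k=D^k\times D^{(n-1)-k}$ on the page $M$ produces an $n$-dimensional \handle{k} $\mathbf{h}_j^k$ on $\operatorname{hob}(M)$: one thickens by the interval $[0,\tfrac12]$ and then collapses the boundary direction, using the key diffeomorphism $D^{(n-1)-k}\times[0,\tfrac12]/\sim_{\ref{0.5}}\;\cong\;D^{n-k}$, so that $\mathbf{h}_j^k\cong D^k\times D^{n-k}$. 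Crucially, under this identification the cocore $coc(\operatorname{h}_j^k)=0\times D^{(n-1)-k}\subset M$ thickens to $0\times(D^{(n-1)-k}\times[0,\tfrac12])$, whose two ``ends'' $0\times D^{(n-1)-k}\times\{0\}$ and $0\times D^{(n-1)-k}\times\{\tfrac12\}$ lie in the front cover $M\times\{0\}$ and the back cover $M\times\{\tfrac12\}$ respectively, and together they form the cocore $coc(\mathbf{h}_j^k)=0\times D^{n-k}$ of the $n$-dimensional handle after collapsing. This is the geometric heart of the statement.

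Next I would invoke Proposition~\ref{dual handle decomposition}: dualizing reverses the order of attachment and reinterprets each \handle{k} $D^k\times D^{n-k}$ as an \handle{(n-k)} $D^{n-k}\times D^k$, interchanging the roles of core and cocore and of attaching sphere and belt sphere. Hence the attaching sphere of the dual handle ${\mathbf{h}_j^k}^*$ is, by definition, the belt sphere of $\mathbf{h}_j^k$, which is exactly $coc(\mathbf{h}_j^k)$; and its attaching region is the belt region, i.e. a neighborhood of $coc(\mathbf{h}_j^k)$ in $\partial(\operatorname{hob}(M)\text{ after attaching up to that stage})$. By the previous paragraph this belt sphere decomposes as $coc(\operatorname{h}_j^k)$ (sitting in the back cover $M\times\{\tfrac12\}$) glued to its mirror copy $coc(\operatorname{h}_j^k)$ (sitting in the front cover $M\times\{0\}$) along $\partial(coc(\operatorname{h}_j^k))\subset\partial M$, which is precisely the description of $coc(\operatorname{h}_j^k)\cup coc(\operatorname{h}_j^k)$ inside the double $DM=\partial\operatorname{hob}(M)$.

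The last step accounts for the gluing map $\varphi^+$ in Step~3 (Proposition~\ref{handle decomposition on double}). The dual handle decomposition $\mathbf{h}^*$ is attached along the collar $[0,1]\times\overline{DM}$, which is identified with a collar of $\partial\operatorname{hob}(M)$ in the \emph{first} half open book via $\operatorname{id}_{[0,1]}\times\varphi^+$. Since $\varphi^+$ is the identity on the copy of $M$ forming the back cover and equals $\varphi$ on the copy forming the front cover, the portion $coc(\operatorname{h}_j^k)\subset M\times\{\tfrac12\}$ is carried to $coc(\operatorname{h}_j^k)$ in the back cover of the first half open book unchanged, while the portion $coc(\operatorname{h}_j^k)\subset M\times\{0\}$ is carried to $\varphi(coc(\operatorname{h}_j^k))$ in the front cover. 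This yields $a({\mathbf{h}_j^k}^*)=coc(\operatorname{h}_j^k)\cup\varphi(coc(\operatorname{h}_j^k))\subset DM$ as claimed, and the identical bookkeeping applied to a full belt-region neighborhood $D^{n-k}\times D^k$ rather than just the core $D^{n-k}\times 0$ gives the statement about the attaching region $\operatorname{h}_j^k\cup\varphi(\operatorname{h}_j^k)$, with the two intersection identities with $M\times\{\tfrac12\}$ and $M\times\{0\}$ falling out directly.

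I expect the main obstacle to be purely one of careful bookkeeping rather than of any new idea: one must keep straight which of the two half open books $\mathbf{h}^*$ is being attached to, orient the identifications consistently (the target of $\varphi^+$ carries the reversed orientation $\overline{DM}$), and make the collapsing diffeomorphism $D^{(n-1)-k}\times[0,\tfrac12]/\sim_{\ref{0.5}}\cong D^{n-k}$ compatible with the belt-sphere and belt-region identifications simultaneously for all handles at once, respecting the filtration order. A clean way to organize this is to first prove the statement for the belt sphere (the core $D^{n-k}\times 0$ of the dual handle) and then observe that the attaching region is just a tubular neighborhood thereof in $\partial\operatorname{hob}(M)=DM$, so no separate argument is needed for the second assertion. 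No delicate transversality or smoothing issue arises beyond what is already implicit in Lemma~\ref{lemma handle decomposition on hob} and Proposition~\ref{dual handle decomposition}.
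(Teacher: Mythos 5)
Your proof follows essentially the same route as the paper's: identify $a({\mathbf{h}_j^k}^*)$ with $b(\mathbf{h}_j^k)$, unwind the quotient $\operatorname{h}_j^k\times[0,\tfrac12]/\sim_{\ref{0.5}}$ to express this belt sphere as the doubled cocore $coc(\operatorname{h}_j^k)\cup coc(\operatorname{h}_j^k)$ split across the back and front covers, and then account for the gluing $\operatorname{id}_{[0,1]}\times\varphi^+$ of Proposition~\ref{handle decomposition on double} twisting the front-cover hemisphere by $\varphi$. One terminological slip to fix: you twice call the belt sphere of $\mathbf{h}_j^k$ ``exactly $coc(\mathbf{h}_j^k)$'' and say the two ends of the thickened cocore ``form the cocore'' after collapsing --- in fact $b(\mathbf{h}_j^k)=0\times\partial D^{n-k}=\partial\,coc(\mathbf{h}_j^k)$ is one dimension lower than the cocore, and the two ends (together with the collapsed equator, up to isotopy) form the \emph{belt sphere}, not the cocore; your subsequent sentences use the correct object $coc(\operatorname{h}_j^k)\cup coc(\operatorname{h}_j^k)$, so the argument is unaffected.
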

\begin{proof}
    The handle ${\mathbf{h}_j^k}^*$ is an $n$-dimensional $(n-k)$-handle, so its attaching sphere is $(n-k-1)$-dimensional. The cocore of the $(n-1)$-dimensional $k$-handle $\operatorname{h}_j^k$ is an $(n-k-1)$-dimensional disk, which confirms that the dimensions are consistent.
    
    Recall that $\mathbf{h}_j^k\subset \operatorname{hob}(M)$ is induced by $\operatorname{h}_j^k\subset M$. Specifically, $\mathbf{h}_j^k$ is the quotient of $\operatorname{h}_j^k\times[0,\frac{1}{2}]\cong D^{k}\times D^{(n-1)-k}\times[0,\frac{1}{2}]$ under $\sim_{\ref{0.5}}\text{and}\sim_{\ref{1}}$. In the case of trivial monodromy, the attaching sphere ${a(\mathbf{h}_j^k}^{*})$ is isotopic to the belt sphere $b(\mathbf{h}_j^k)$. The belt sphere
    $$b(\mathbf{h}_j^k)=\partial \left(D^{(n-1)-k}\times [0,1/2]\right)\Big/\sim_{\ref{0.5}}\text{and}\sim_{\ref{1}}$$
    $$=\left(\partial D^{(n-1)-k}\times[0,1/2]\cup D^{(n-1)-k}\times\{0\}\cup D^{(n-1)-k}\times\{1/2\}\right)\Big/\sim_{\ref{0.5}}\text{and}\sim_{\ref{1}}$$
    is isotopic to
    $$\left(D^{(n-1)-k}\times\{0\}\big/\sim_{\ref{0.5}}\text{and}\sim_{\ref{1}}\right)\cup\left( D^{(n-1)-k}\times\{1/2\}\big/\sim_{\ref{0.5}}\text{and}\sim_{\ref{1}}\right)$$
    $$=coc(\operatorname{h}_j^k)\cup coc(\operatorname{h}_j^k)$$
    the union of the cocores of $\operatorname{h}_j^k$ in the front and back covers as shown in Figure~\ref{fig:union of cocore}.
    
\begin{figure}
    \centering
    \includegraphics[width=0.75\textwidth]{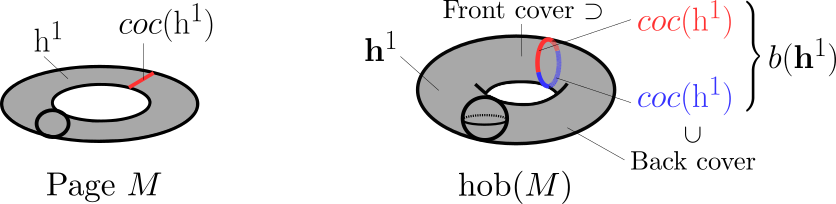}
    \caption{Trivial monodromy case: $a({\mathbf{h}^1}^{*})$ and $b(\mathbf{h}^1)$ are isotopic to $coc(\operatorname{h}^1)\cup coc(\operatorname{h}^1)\subset \partial\operatorname{hob}(M)\cong M\times\{0\}\cup_{\partial M}M\times\{\frac{1}{2}\}$.}
    \label{fig:union of cocore}
\end{figure}

    For the non-trivial monodromy case, we still use the identity map to glue the back cover of $\operatorname{hob}(M)$ to the front cover of $(\operatorname{hob}(M),\overline{D M})$. Thus, the hemisphere of the attaching sphere $a({\mathbf{h}_j^k}^{*})$ lying inside the back cover agrees with $coc(\operatorname{h}_j^k)\subset M\times\{\frac{1}{2}\}$ as in the trivial monodromy case. Now we use the monodromy map to glue the back cover of $(\operatorname{hob}(M),\overline{D M})$ to the front cover of $\operatorname{hob}(M)$, therefore the hemisphere of the attaching sphere $a({\mathbf{h}_j^k}^{*})$ in the front cover is given by $\varphi(coc(\operatorname{h}_j^k))$ instead of $\operatorname{id}(coc(\operatorname{h}_j^k))$.

    Let $D_\pm^{n-k-1}$ denote the upper and lower hemispheres of $\partial D^{n-k}$. Since the gluing map on the back cover is given by $\operatorname{id}_{M\times\{\frac{1}{2}\}}$, thus $${\Phi_j^k}^*(D_+^{n-k-1}\times D^k)=D^k\times D^{n-k-1}=\operatorname{h}_j^k\subset M\times \{1/2\}.$$ In addition to interchanging the factors $D_-^{n-k-1}\times D^k$, we also consider the gluing map in the back cover given by $\varphi$. Therefore, $${\Phi_j^k}^*(D_-^{n-k-1}\times D^k)=\varphi(D^k\times D^{n-k-1})=\varphi(\operatorname{h}_j^k)\subset M\times \{0\}. \eqno\qed$$
\renewcommand{\qed}{}
\end{proof}

\begin{example}\label{ex dehn twist}
    We construct a handle decomposition on $3$-dimensional open books with page $S^1\times D^1$. As seen in Figure~\ref{fig:handle_decomposition_on_hob}, the annulus and the half open book with annulus page both admit a handle decomposition consisting of one \handle{0} and one \handle{1}. Consider the monodromy $\tau$ given by a single Dehn twist along $S^1\times\{0\}\subset S^1\times D^1$, which sends the dashed arc in Figure~\ref{fig:part b} to the arc in Figure~\ref{fig:image of cocore}. The attaching region of the induced \handle{2} is depicted in Figure~\ref{fig:attaching sphere}. This \handle{2} is in canceling position with the \handle{1} as its attaching sphere, given by the union of the coccore of the \handle{1} in the back cover with the image $\tau(coc(D^1\times D^1))$ in the front cover, intersects the belt sphere of the \handle{1} geometrically once. We obtain a handle decomposition of the open book $(S^1 \times D^1, \tau)$ by adding the induced \handle{3}. This is a handle decomposition of $S^3$ because after canceling handles, we recover $S^3$ as the union of a single \handle{0} and a single \handle{3}. Forming open book connected sum with this open book decomposition of $S^3$ is called a \textit{Hopf plumbing}~\cite{Harer}. The binding of this open book is a Hopf link, hence the name, see~\cite{Etnyre} for more details.
    \begin{figure}[ht]
        \centering
            \begin{subfigure}{0.55\textwidth}
                \centering
                \includegraphics[scale=0.5]{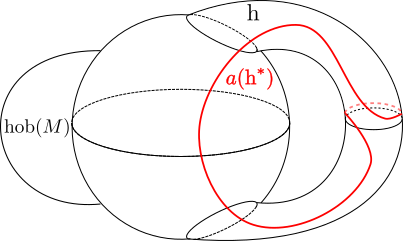}
            \caption{$a(\mathbf{h}^*)=coc(\operatorname{h})\cup\tau(coc(\operatorname{h}))$.}
            \label{fig:attaching sphere}
            \end{subfigure}
        \hfill
            \begin{subfigure}{0.44\textwidth}
            
                \centering
                \includegraphics[scale=0.45]{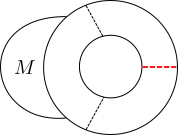}
                \caption{Back cover: $coc(\operatorname{h})$.}
                \label{fig:part b}

                \centering
                \includegraphics[scale=0.45]{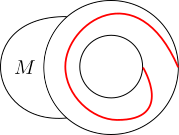}    \caption{Front cover: $\tau(coc(\operatorname{h}))$.}
                \label{fig:image of cocore}

            \end{subfigure}
        \caption{}
        \label{fig:stabilization3d}
    \end{figure}
\end{example}

% \begin{remark}
%     When the monodromy is trivial, the attaching sphere of the \handle{2} induced by the \handle{1} coincides with the belt sphere of the \handle{1} as depicted in Figure~\ref{fig:union of cocore}. By Lemma~\ref{lemma attaching region}, the attaching region $\partial D^2\times D^1$ of this \handle{2} is given by the union of the \handle{1} $D^1\times D^1$ in the front and back covers of the half open book. The boundary of $\operatorname{hob}(S^1\times D^1)\cup$\handle{2} is a $2$-sphere along which we glue the \handle{3} induced by the \handle{0} to obtain the double of a solid torus -- $S^1\times S^2$. The open book with annulus page and monodromy given by $p$ times $\tau$ composed with itself is also known as the lens space $\operatorname{L}(p,1)$, see~\cite[Section~5.1]{hsueh2023kirby}.
%     % \begin{figure}[ht]
%     %     \centering
%     %     \includegraphics[scale=0.5]{hob2.png}
%     %     \caption{Without composing a Dehn twist, the attaching sphere of the induced \handle{(k+1)} is isotopic to the belt sphere of the \handle{k}. When $k=1$, gluing two solid tori via the identity map on the boundary tori gives $S^1\times S^2$.}
%     %     \label{fig:hob2}
%     % \end{figure}
% \end{remark}

\section{Handle exchange}\label{sec handle exchange}

%Handles on which the monodromy acts trivially are removed, at the expense of introducing a boundary connected summand of a disk bundle over the sphere to the page. The attaching information of the discarded handle is encoded into the new monodromy map.
%The process of simplifying the page of an open book involves selecting specific handles, referred to as \textit{exchangeable}, and replacing them by a boundary connected sum with $S^{(n-i)}\times D^{i-1}$, where $i$ is the index of the selected handle. To stabilize the page as in Theorem~\ref{thm stabilization of open books}, we introduce a canceling pair of $(k-1)$- and \handles{k} and exchange the \handle{k} with a boundary connected sum with $S^{(n-k)}\times D^{k-1}$.

%\cs{Taking the boundary connected sum with a thickened sphere respect the product-like structure of half open books, i.e.\ $\operatorname{hob}(M\mathop{\natural}{S}^k\times D^{n-1-k})\cong \operatorname{hob}(M)\mathop{\natural}{S}^k\times D^{n-k}$. Since $\operatorname{hob}(M\mathop{\natural} S^k\times D^{n-k-1})\cong\operatorname{hob}(M)\mathop{\natural}\operatorname{hob}(S^k\times D^{n-k-1})\cong \operatorname{hob}(M)\mathop{\natural} (S^k\times\operatorname{hob}(D^{n-k-1}))$ and $\operatorname{hob}(D^{n-k-1})\cong D^{n-k}$.}

\begin{definition}
    Given a handle decomposition $\operatorname{h}$ with one \handle{0}, $\mu_1$ \handles{1}, $\dots$, $\mu_{n-2}$ \handles{(n-2)}, and $\mu_{n-1}$ top-dimensional \handles{(n-1)}, a subset $$\mathcal{A}\subseteq\{(j,k)|2\leq k\leq n-1, 1\leq j\leq \mu_k \}\subset \{(j,k)|\operatorname{h}_j^k\text{ is a handle of $\operatorname{h}$}\}$$ is called a \textit{selection} if it satisfies the following: Suppose $(j,k)\in\mathcal{A}$ and there is a handle $\operatorname{h}_i^\ell$ whose attaching sphere cannot be isotoped to have no intersection with the belt sphere of $\operatorname{h}_j^k$, then $(i,\ell)\in\mathcal{A}$.% The selection, denoted by $$\mathcal{S}=\{(j,k)|2\leq k\leq n-2, 1\leq j\leq \mu_k\},$$ of all handles of indices greater than or equal to two is called the \textit{maximal} selection.
\end{definition}

\begin{example}
    Consider the handle decomposition on $D^n$ consisting of one \handle{0}, a pair of canceling $(k-1)$- and \handles{k}, where $3\leq k\leq n$. The intersection of the attaching sphere of the \handle{k} with the belt sphere of the \handle{(k-1)} cannot be removed by isotopy. Therefore, the \handle{(k-1)} does not form a selection, while the \handle{k} itself is a selection. The \handle{(k-1)} union the \handle{k} form the other possible selection of this given handle decomposition. Note that removing the $k$-handle from this handle decomposition is a handle decomposition of a well-defined manifold, while the result of removing the $(k-1)$-handle is not.
\end{example}

A selection leads to a handle decomposition of a potentially non-diffeomorphic manifold obtained by eliminating all the selected handles. We proceed by removing the selected handles in descending order of index. Suppose a \handle{k} is selected and no handle of index $>k$ is selected. We isotope the attaching sphere of any handle of indices $>k$ to have no intersection with the belt sphere of the selected \handle{k}, which is possible by definition, then the result of removing this handle is still a handle decomposition of a manifold. Recall that the diffeomorphism type of the resulting manifold is independent of the order in which handles of a given index are removed, just as the order does not matter when attaching handles of the same index. Inductively, the resulting handle decomposition with all selected handles removed determines a manifold up to diffeomorphism.

%\begin{definition}
 %   Given a handle decomposition on $M$ and a selection $\mathcal{A}$, let $M-\mathcal{A}$ denote the diffeomorphism type of a manifold realized by the handle decomposition obtained after removing all the selected handles.
%\end{definition}

\begin{definition}\label{def exchanged page}
    Given a selection $\mathcal{A}$ on a handle decomposition $\operatorname{h}$ on $M$, the 
    \textit{exchanged} page, denoted by $\calA{M}$, is obtained from $M$ by replacing each selected handle $\operatorname{h}_j^k$, $(j,k)\in \mathcal{A}$, by a boundary connected sum $S^{n-k}\times D^{k-1}$ along the boundary component to which $\operatorname{h}_j^k$ was attached.
\end{definition}
The exchanged page $\calA{M}$ inherits a handle decomposition, denoted by $\calA{\operatorname{h}}$, from $\operatorname{h}$ on $M$, by regarding ${S}^{n-k}\times D^{k-1}$ as a \handle{(n-k)} attachment. We denote the handle decomposition on $\operatorname{hob}(\calA{M})$ induced by $\calA{\operatorname{h}}$ as $\calA{\mathbf{h}}$.

% \begin{remark}
%     $\calS{M}$ is always a simple manifold because all \handles{1} are simple~\cite[Example~4.1.4. (b)]{gs} and all handles of higher indices are replaced by simple handles.    
% \end{remark}

We now restrict our attention to selections that are compatible with a given monodromy map. In the special case where the monodromy is the identity on the page, no restrictions are imposed on the selection.

\begin{definition}\label{def exchangeable selection}
    Given a handle decomposition $\operatorname{h}$ on $M$ and a monodromy map $\varphi\colon M\rightarrow M$, a selection $\mathcal{A}$ is \textit{exchangeable} if the restriction of $\varphi$ on the selected handles is isotopic to the identity map, or trivial for short.
\end{definition}

We now describe the paths along which we would like to slide handles. By isotopy, we may always assume that $a(\mathbf{h}_j^k)\cap \partial D^n\neq\emptyset$, which means that the attaching sphere of each handle $\mathbf{h}_j^k$ (with index $k>0$) lies on the boundary of the \handle{0} at least partially. By duality, $b({\mathbf{h}_j^k}^*)\cap \partial D^n\neq\emptyset$, i.e.\ the belt sphere of each handle ${\mathbf{h}_j^k}^*$ (with index $k>0$) lies on the boundary of the top-dimensional \handle{n} at least partially.

\begin{definition}
    Let $\mathcal{A}$ be a selection on a handle decomposition $\operatorname{h}$ on $M$. For each $(j,k)\in\mathcal{A}$, let $\gamma^k_j\colon [0,1]\rightarrow c(\mathbf{h}_j^k)\cong D^k$ be a radial path from the center of the core of $\mathbf{h}_j^k$ to a point $\gamma^k_j(1)\in a(\mathbf{h}_j^k)\cap \partial D^{n}$ on the perimeter of the core disk, as illustrated in Figure~\ref{fig:exchange_move}. Similarly, let ${\gamma_j^k}^*\colon[0,1]\rightarrow c({\mathbf{h}_j^k}^*)\cong D^{n-k}$ be a radial path from a point ${\gamma_j^k}^*(0)\in a({\mathbf{h}_j^k}^*)\cap \partial D^n$ on the perimeter of the core disk to the center of the core of ${\mathbf{h}_j^k}^*$.
\end{definition}

Recall that by the definition of exchangeability, the monodromy restricts to the identity map on a selected handle ${\operatorname{h}_j^k}$, and the attaching sphere $a({\mathbf{h}_j^k}^*)$ is isotopic to the belt sphere $b(\mathbf{h}_j^k)$. Therefore, we can slide selected handles as follows, making the exchanged page come into the picture.

\begin{definition}\label{def A-modification of handle decomposition}
    Let $\mathbf{h}\cup_{\varphi}\mathbf{h}^*$ be an induced handle decomposition on $\operatorname{Ob}(M,\varphi)$ and $\mathcal{A}$ an exchangeable selection of $\operatorname{h}$. The following serie of handle slides on $\mathbf{h}\cup_{\varphi}\mathbf{h}^*$ are called \textit{exchange moves} and the resulting handle decomposition on $\operatorname{Ob}(M,\varphi)$ is denoted by $\calA{{(\mathbf{h}\cup_{\varphi}\mathbf{h}^*)}}$. For each $(j,k)\in\mathcal{A}$,
    \begin{enumerate}
        \item isotope $a({\mathbf{h}_j^k}^*)$ along $\gamma^k_j$, from $b(\mathbf{h}_j^k)=0\times\partial D^{n-k}$ to $\gamma^k_j(1)\times\partial D^{n-k}$, into the boundary of the \handle{0} (Figure~\ref{fig:exchange_move}), and
        \item isotope $a(\mathbf{h}_j^k)$ along ${\gamma_j^k}^*$, from ${\gamma_j^k}^*(0)\times\partial D^{k}$ to $b({\mathbf{h}_j^k}^*)=0\times\partial D^k$, which is equivalent to isotoping the belt sphere $b(\mathbf{h}_j^k)$ into the boundary of the \handle{n}.
    \end{enumerate}
\end{definition}

\begin{figure}
        \centering
        \includegraphics[width=0.6\textwidth]{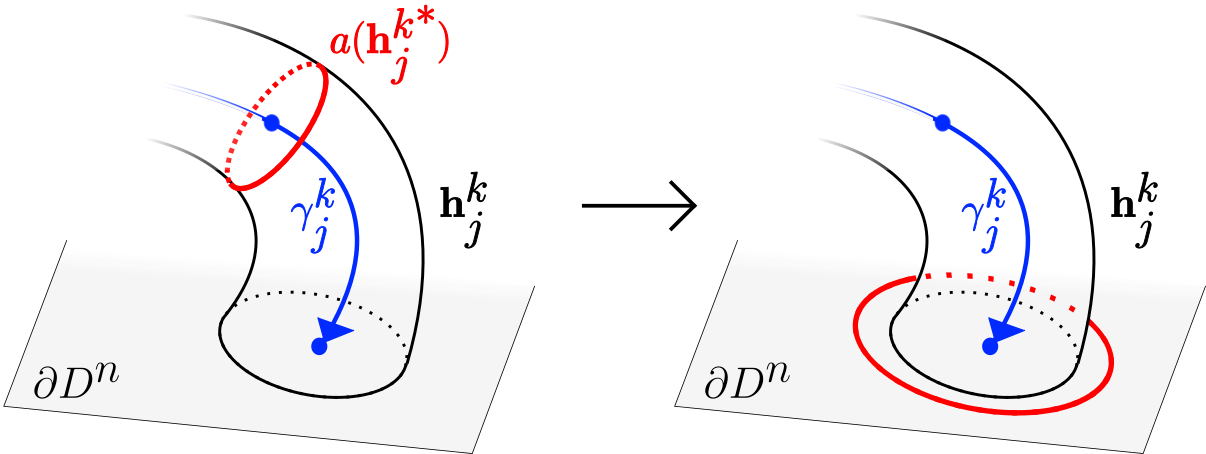}
        \caption{Isotoping the attaching sphere $a({\mathbf{h}_j^k}^*)$ of the dual of a selected handle $\mathbf{h}_j^k$ into the boundary of the \handle{0}.}
        \label{fig:exchange_move}
\end{figure}

Figure~\ref{fig:exchange_move_example} illustrates a schematic of a selected handle (blue) and its dual (red) performing an exchange move.

\begin{figure}[ht]
    \centering
    \includegraphics[width=0.64\linewidth]{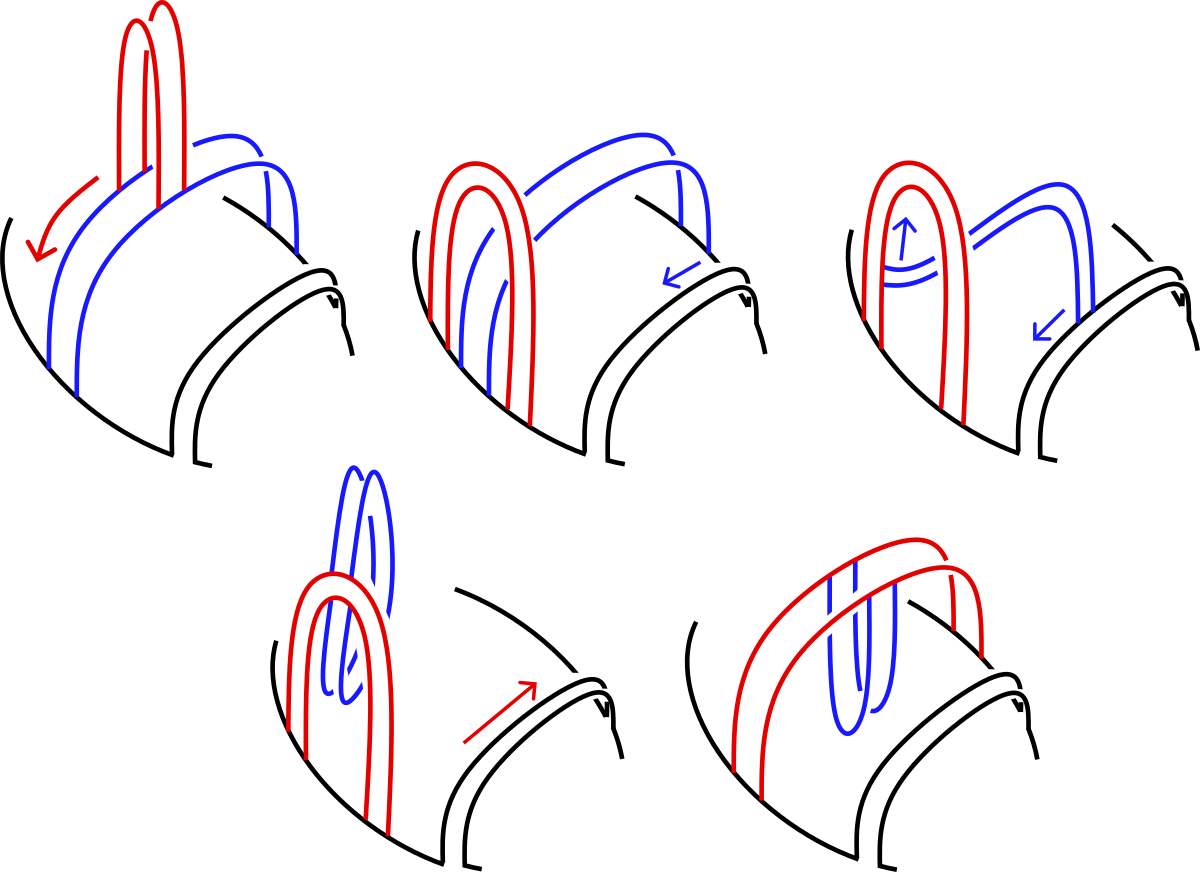}
    \caption{A schematic of exchange moves.}
    \label{fig:exchange_move_example}
\end{figure}

\begin{theorem}\label{thm technical}
    There exists a monodromy map $\calA{\varphi}\colon\calA{M}\rightarrow\calA{M}$ on the exchanged page $\calA{M}$ such that performing exchange moves on the handle decomposition on $\operatorname{Ob}(M,\varphi)$ induced by a handle decomposition $\operatorname{h}$ on $M$ gives the handle decomposition on $\operatorname{Ob}(\calA{M},\calA{\varphi})$ induced by the handle decomposition $\calA{\operatorname{h}}$ on the exchanged page $\calA{M}$, that is to say the diagram in Figure~\ref{fig:diagram} commutes.
    \begin{figure}
        \centering
        \begin{tikzpicture}
            \node[] (a) at (-1.5,1){$\operatorname{h}$};
            \node[] (b) at (6,1){$\calA{\operatorname{h}}$};
            \node[] (c) at (-1.5,0){${\mathbf{h}\cup_{\varphi}\mathbf{h}^*}$};
            \node[] (d) at (3.9,0){$\calA{{(\mathbf{h}\cup_{\varphi}\mathbf{h}^*)}}$};
            \node[] (e) at (6,0) {$=\calA{\mathbf{h}}\cup_{\calA{\varphi}}{\calA{\mathbf{h}}}^*$};
            \node[] at (2.2,1.2){\textit{page exchange}};
            \node[] at (1,0.2){\textit{exchange moves}};
            \graph{(a)->[](b)};
            \graph{(c)->[](d)};
            \graph{(a)->[](c)};
            \graph{(b)->[](e)};
            %\draw[black, thick] (m)--(n);
            %\node[] (z2) at (10,-1.4){$\approx D^1\times D^2$};
        \end{tikzpicture} 
    \caption{The vertical arrows represent a handle decomposition of the page, inducing a handle decomposition of the open book.}
    \label{fig:diagram}
    \end{figure}
\end{theorem}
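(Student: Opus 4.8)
The plan is to carry out the exchange moves one selected handle at a time, read off the resulting handle decomposition handle by handle, and then recognise it as an induced decomposition. First I would dispose of the easy parts: a handle $\operatorname{h}_i^\ell$ of $\operatorname{h}$ with $(i,\ell)\notin\mathcal{A}$, together with its dual, is never touched by an exchange move, and since $\calA{M}$ agrees with $M$ away from the selected handles, this handle and its dual already coincide with the induced handle and dual handle that $\calA{\operatorname{h}}$ contributes at $(i,\ell)$. Exchangeability together with the closedness clause in the definition of a selection ensures that the slides belonging to distinct elements of $\mathcal{A}$ can be performed in any order without creating new intersections between attaching and belt spheres, so it suffices to analyse a single selected \handle{k} $\operatorname{h}_j^k$ with $2\le k\le n-1$ and its two images $\mathbf{h}_j^k$ and ${\mathbf{h}_j^k}^*$ in $\mathbf{h}\cup_\varphi\mathbf{h}^*$. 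I will let $\calA{\varphi}$ restrict to $\varphi$ on the part of $\calA{M}$ shared with $M$ and postpone defining it on each new summand $S^{n-k}\times D^{k-1}$ until the handle picture below forces its value; informally it will be the diffeomorphism that remembers the old attaching map of $\mathbf{h}_j^k$.

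Next I would analyse exchange move~(1). By exchangeability and Lemma~\ref{lemma attaching region}, the attaching sphere $a({\mathbf{h}_j^k}^*)$ is isotopic to the belt sphere $b(\mathbf{h}_j^k)=0\times\partial D^{n-k}$. Sliding it along the radial path $\gamma_j^k$ through the core of $\mathbf{h}_j^k$ out to $\gamma_j^k(1)\times\partial D^{n-k}\subset\partial D^n$ carries the entire attaching region of ${\mathbf{h}_j^k}^*$ into the boundary of the \handle{0}, where it lies as a standard $(n-k-1)$-sphere with its trivial normal framing. Hence, after move~(1), ${\mathbf{h}_j^k}^*$ is attached trivially, and the \handle{0} together with ${\mathbf{h}_j^k}^*$ exhibits a boundary connected summand $S^{n-k}\times D^k$, which is precisely $\operatorname{hob}(S^{n-k}\times D^{k-1})$. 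Since the induced handle of a trivially attached handle is again trivially attached (Lemma~\ref{lemma handle decomposition on hob}), the handle ${\mathbf{h}_j^k}^*$ has become exactly the \handle{(n-k)} that Step~1 of Definition~\ref{def induced handle decomposition} contributes from the replacement summand of $\calA{M}$. The one subtlety here is the framing, i.e.\ checking that this sphere really sits trivially inside $\partial D^n$ rather than with nonzero twisting; the radial slide is what guarantees this.

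Then comes exchange move~(2), which is where $\calA{\varphi}$ gets pinned down; I expect this to be the main obstacle. Dually to move~(1), pushing $b(\mathbf{h}_j^k)$ into the boundary of the \handle{n} re-attaches $\mathbf{h}_j^k$ so that, with respect to the dual filtration, it occupies the slot immediately after the dual of the \handle{0}; this is exactly the slot of the dual \handle{k} that Step~2 of the induced decomposition of $\operatorname{Ob}(\calA{M},\calA{\varphi})$ contributes from the replacement summand. By Lemma~\ref{lemma attaching region}, that dual handle is attached along the union of the cocore of the replacement handle in the back cover, glued by the identity, and its image under $\calA{\varphi}$ in the front cover. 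Comparing this with the isotoped attaching sphere of $\mathbf{h}_j^k$ — whose back-cover hemisphere is the standard one, since the back cover is always glued by the identity — forces the value of $\calA{\varphi}$ on the summand $S^{n-k}\times D^{k-1}$: it must carry the standard front-cover copy of the replacement handle's cocore onto the front-cover hemisphere of $a(\mathbf{h}_j^k)$. The delicate point is to check that this prescription extends to a genuine self-diffeomorphism of $S^{n-k}\times D^{k-1}$ fixing the boundary pointwise, and that it agrees on overlaps with the copy of $\varphi$ imposed on the shared part; I would obtain the extension from the isotopy extension theorem, using that the two hemispheres in question are isotopic rel boundary because each of them bounds the relevant cocore disk.

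Finally I would assemble the pieces. After all the exchange moves, and after the legitimate reorderings that move trivially attached handles next to the \handle{0} and their duals next to the \handle{n}, the handles of $\calA{{(\mathbf{h}\cup_\varphi\mathbf{h}^*)}}$ are, stage by stage: the \handle{0}; the trivially attached handles arising from the selected ${\mathbf{h}_j^k}^*$, together with the untouched induced handles of non-selected indices; their duals; the handles arising from the selected $\mathbf{h}_j^k$; and the \handle{n}. Handle for handle this is the induced decomposition $\calA{\mathbf{h}}\cup_{\calA{\varphi}}{\calA{\mathbf{h}}}^*$ of $\operatorname{Ob}(\calA{M},\calA{\varphi})$ built from $\calA{\operatorname{h}}$; matching the handles one at a time then produces a diffeomorphism $\operatorname{Ob}(M,\varphi)\cong\operatorname{Ob}(\calA{M},\calA{\varphi})$ under which the two handle decompositions correspond, which is precisely the commutativity of the diagram in Figure~\ref{fig:diagram}.
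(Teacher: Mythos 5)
Your proposal is correct and follows essentially the same plan as the paper: carry out the exchange moves, recognise via Lemma~\ref{lemma attaching region} that the $0$-handle together with the slid dual handles ${\mathbf{h}_j^k}^*$ assembles to $\operatorname{hob}(\calA{M})$, observe that the remaining handles form the dual relative decomposition, and read off $\calA{\varphi}$ from the front-cover part of the attaching data of $\mathbf{h}_j^k$. The one place you take a slightly more roundabout route is in pinning down $\calA{\varphi}$: you impose constraints on its values on each new summand and then invoke the isotopy extension theorem to get a genuine boundary-fixing self-diffeomorphism, whereas the paper avoids any existence argument by noting that the exchanged decomposition is, by construction, a gluing of two copies of $\operatorname{hob}(\calA{M})$ along a map $\calA{\varphi}^+$ which is the identity on the back-to-front interface (by Lemma~\ref{lemma attaching region}), and simply defines $\calA{\varphi}$ as the restriction of that gluing map; the framing subtlety you flag is handled in the paper by~\cite[Example~4.1.4.(d)]{gs}. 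Both routes arrive at the same map, so the difference is one of economy rather than content.
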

\begin{proof}
We start with a handle decomposition on $\operatorname{Ob}(M,\varphi)$ induced by a handle decomposition $\operatorname{h}$ on $M$. Since $\mathcal{A}$ is an exchangeable selection, for each $(j,k)\in\mathcal{A}$, ${\mathbf{h}_j^k}^*$ is a copy of $D^{n-k}\times D^{k}$, where $\partial D^{n-k}\times D^{k}$ is identified with $D^k\times \partial D^{n-k}\subset\mathbf{h}_j^k$. Firstly, the exchange moves slides the attaching region of ${\mathbf{h}_j^k}^*$ to the boundary of the \handle{0}, which allows us to view ${\mathbf{h}_j^k}^*$ as an \handle{(n-k)} attached to the \handle{0}. If we identify the \handle{0} $D^n$ with $D^{n-k}\times D^k$, then the attaching map of this \handle{(n-k)} can be isotoped to the identity map $\partial D^{n-k}\times D^k\rightarrow \partial D^{n-k}\times D^k$~\cite[Example~4.1.4.(d)]{gs}. Therefore, $\operatorname{hob}(M)\cup {\mathbf{h}_j^k}^*$ is diffeomorphic to $$\operatorname{hob}(M)\mathop{\natural} (S^{n-k}\times D^k),$$ where the boundary connected sum takes place along the boundary component that $\mathbf{h}_j^k$ is attached along. Note that $\operatorname{hob}(M)\mathop{\natural} (S^{n-k}\times D^{k})$ is diffeomorphic to $\operatorname{hob}(M\mathop{\natural} (S^{n-k}\times D^{k-1}))$. Secondly, the exchange moves involve sliding $\mathbf{h}_j^k$ such that it is no longer attached to the boundary of the \handle{0}, removing $\mathbf{h}_j^k$ from $\operatorname{hob}(M)$. Consequently, the union of the \handle{0}, all $\mu_1$ \handles{1}, the handles $\mathbf{h}_i^{\ell}$ with $(i,\ell)\notin \mathcal{A}$, and the dual handles ${\mathbf{h}_j^k}^*$ with $(j,k)\in\mathcal{A}$, coincides with a handle decomposition on the half open book with the exchanged page $M^{\mathcal{A}}$. Similarly, the remaining handles of $\calA{{(\mathbf{h}\cup_{\varphi}\mathbf{h}^*)}}$ --- the \handle{n}, the $\mu_1$ \handles{(n-1)}, the handles $\mathbf{h}_i^{\ell}$ with $(i,\ell)\in \mathcal{A}$, and the handles ${\mathbf{h}_j^k}^*$ with $(j,k)\notin\mathcal{A}$ --- form a relative handle decomposition on $(\operatorname{hob}(M^{\mathcal{A}}),\overline{D M^{\mathcal{A}}})$. In other words, $\calA{{(\mathbf{h}\cup_{\varphi}\mathbf{h}^*)}}$ is the union of a handle decomposition on $\operatorname{hob}(\calA{M})$ and a relative handle decomposition on $(\operatorname{hob}(M^{\mathcal{A}}),\overline{D M^{\mathcal{A}}})$. 
%because $\operatorname{hob}(M\mathop{\natural}{S}^k\times D^{n-1-k})\cong \operatorname{hob}(M)\mathop{\natural}{S}^k\times D^{n-k}$
%It remains to find a monodromy map $\calA{\varphi}$ on $\calA{M}$ such that two copies of $\operatorname{hob}(\calA{M})$ can be glued together to give the handle decomposition $\calA{\mathbf{h}}\cup_{\calA{\varphi}}{\calA{\mathbf{h}}}^*$ that coincides with the handle decomposition $\calA{{(\mathbf{h}\cup_{\varphi}\mathbf{h}^*)}}$.
Suppose $\operatorname{h}_j^k$ was exchanged for $S^{n-k}\times D^{k-1}$, then the attaching map of the dual of $S^{n-k}\times D^{k}\subset\operatorname{hob}(\calA{M})$ is given by $({\Phi_j^k}^{*})^*=\Phi_j^k$. By isotopy, we can assume that the image under $\Phi_j^k$ of the upper and lower thickened hemispheres of its domain $\partial{D}^{k}\times D^{n-k}$ lie in the front and back covers, respectively, for each $(j,k)\in\mathcal{A}$. Moreover, $$\Phi_j^k({D}^{k-1}_+\times D^{n-k})=D^{n-k}\times D^{k-1}\subset S^{n-k}\times D^{k-1}\subseteq\calA{M}\times\{1/2\}$$ and $$\Phi_j^k({D}^{k-1}_-\times D^{n-k})\subseteq\calA{M}\times\{0\}.$$
Using Lemma~\ref{lemma attaching region}, we observe that the front cover of the second copy is glued to the back cover of the first with the identity map on $\calA{M}$. Let ${\calA{\varphi}}^+\colon D\calA{M}\rightarrow D\calA{M}$ denote the gluing map such that $$\calA{{(\mathbf{h}\cup_{\varphi}\mathbf{h}^*)}}=\operatorname{hob}(\calA{M})\cup_{{\calA{\varphi}}^+} \operatorname{hob}(\calA{M}),$$ then restriction of ${\calA{\varphi}}^+$ on the back cover gives the desired monodromy map $\calA{\varphi}\colon\calA{M}\rightarrow\calA{M}$. %Note that $\calA{\varphi}(D^{n-k}\times D^{k-1})=\Phi_j^k(D_-^{k-1}\times D^{n-k})$ for each $(j,k)\in\mathcal{A}$. 
%By Lemma~\ref{lemma attaching region}, the image $\Phi_j^k({D}^{k-1}_-\times D^{n-k})\subseteq\calA{M}\times\{0\}$ is the image $\calA{\varphi}(D^{n-k}\times D^{k-1})$ that we want, up to interchanging factors. Since exchange moves do not involve sliding any non-selected handles, $\calA{\varphi}$ agrees with $\varphi$ on the handles $\operatorname{h}_i^\ell$, where $(i,\ell)\notin \mathcal{A}$.
To conclude, the desired map $\calA{\varphi}$ on $\calA{M}$ is determined by the restriction of $\varphi$ on the handles $\operatorname{h}_i^\ell\subset\calA{M}$, $(i,\ell)\notin \mathcal{A}$, not involved in the exchange moves. On each $S^{n-k}\times D^{k-1}\subset\calA{M}$ summand (replacing $\operatorname{h}_j^k$), $\calA{\varphi}$ encodes the image of $\Phi_j^k$ in the front cover for each $(j,k)\in\mathcal{A}$.
\end{proof}

Since exchange moves are a sequence of handle slides and handle slides preserve diffeomorphism type~\cite{gs}, we obtain the following corollary.

\begin{corollary}\label{cor partially simplifying the page}
    Given a handle decomposition on $M$ and an exchangeable selection $\mathcal{A}$, there exists a monodromy map $\calA{\varphi}\colon\calA{M}\rightarrow\calA{M}$ on the exchanged page $\calA{M}$ such that $\operatorname{Ob}(M,\varphi)$ is diffeomorphic to $\operatorname{Ob}(\calA{M},\calA{\varphi})$. \qed
\end{corollary}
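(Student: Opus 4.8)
The plan is to produce the monodromy $\calA{\varphi}$ essentially by reverse-engineering the commutative diagram of Figure~\ref{fig:diagram}: start from the induced handle decomposition $\mathbf{h}\cup_{\varphi}\mathbf{h}^*$ on $\operatorname{Ob}(M,\varphi)$, perform the exchange moves of Definition~\ref{def A-modification of handle decomposition} on each selected pair $(j,k)\in\mathcal{A}$, and then show that the resulting handle decomposition $\calA{(\mathbf{h}\cup_{\varphi}\mathbf{h}^*)}$ splits as the union of a half open book on $\calA{M}$ and its dual, glued by some diffeomorphism of $D\calA{M}$ whose restriction to the back cover we take to be $\calA{\varphi}$.

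First I would analyze a single exchange move. By exchangeability, the monodromy restricts trivially to $\operatorname{h}_j^k$, so by Lemma~\ref{lemma attaching region} the attaching sphere $a({\mathbf{h}_j^k}^*)$ is isotopic to the belt sphere $b(\mathbf{h}_j^k)$; hence ${\mathbf{h}_j^k}^*$ is a copy of $D^{n-k}\times D^k$ whose attaching region is identified with $D^k\times\partial D^{n-k}\subset\mathbf{h}_j^k$. Sliding its attaching region onto $\partial D^n$ along $\gamma_j^k$ lets me view ${\mathbf{h}_j^k}^*$ as an $(n-k)$-handle attached to the $0$-handle, and by \cite[Example~4.1.4(d)]{gs} its attaching map can be straightened to the identity $\partial D^{n-k}\times D^k\to\partial D^{n-k}\times D^k$ after identifying $D^n$ with $D^{n-k}\times D^k$. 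Thus $\operatorname{hob}(M)\cup{\mathbf{h}_j^k}^*\cong\operatorname{hob}(M)\mathop{\natural}(S^{n-k}\times D^k)\cong\operatorname{hob}(M\mathop{\natural}(S^{n-k}\times D^{k-1}))$, the boundary connected sum taken along the boundary component carrying $\operatorname{h}_j^k$. Dually, sliding $\mathbf{h}_j^k$ off $\partial D^n$ along ${\gamma_j^k}^*$ removes it from $\operatorname{hob}(M)$ and re-attaches it to the $n$-handle side. Carrying this out for all $(j,k)\in\mathcal{A}$, the $0$-handle, the $\mu_1$ $1$-handles, the handles $\mathbf{h}_i^\ell$ with $(i,\ell)\notin\mathcal{A}$, and the dual handles ${\mathbf{h}_j^k}^*$ with $(j,k)\in\mathcal{A}$ together form a handle decomposition of $\operatorname{hob}(\calA{M})$, which (compatibly with $\calA{\operatorname{h}}$ of Definition~\ref{def exchanged page}) is exactly $\calA{\mathbf{h}}$; symmetrically the complementary handles form a relative handle decomposition on $(\operatorname{hob}(\calA{M}),\overline{D\calA{M}})$, which is ${\calA{\mathbf{h}}}^*$. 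So $\calA{(\mathbf{h}\cup_{\varphi}\mathbf{h}^*)}=\operatorname{hob}(\calA{M})\cup_{g}\operatorname{hob}(\calA{M})$ for some gluing diffeomorphism $g$ of $D\calA{M}$; by Proposition~\ref{handle decomposition on double} this is the handle decomposition induced by $\calA{\operatorname{h}}$ together with the monodromy $\calA{\varphi}\defeq g|_{\text{back cover}}$, and I would record that on the summands $\operatorname{h}_i^\ell$ with $(i,\ell)\notin\mathcal{A}$ this $\calA{\varphi}$ agrees with $\varphi$, while on each new $S^{n-k}\times D^{k-1}$ factor it is dictated by the images $\Phi_j^k(D^{k-1}_\pm\times D^{n-k})$ in the front and back covers, as in Lemma~\ref{lemma attaching region}.

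The main obstacle I anticipate is verifying that $g$ actually is of the form ${\calA{\varphi}}^+$ for a genuine \emph{monodromy map} — i.e.\ that it restricts to the identity near $\partial\calA{M}$ on one of the two half-open-book pieces, so that the self-diffeomorphism $\calA{\varphi}$ of $\calA{M}$ fixes $\partial\calA{M}$ pointwise — and that the two half open books are genuinely glued by a map of the standard doubling form $(\operatorname{id},\calA{\varphi})$ rather than some twisted version. This comes down to bookkeeping: on the back-cover side the gluing is by the identity (the exchange moves do not touch the back copy of $\operatorname{hob}(\calA{M})$), while all the data of $\varphi$ and of the handle attaching maps $\Phi_j^k$ is pushed into the front-cover gluing, so after identifying the $D^{k-1}_+\times D^{n-k}$ hemisphere with the standard $D^{n-k}\times D^{k-1}\subset S^{n-k}\times D^{k-1}$ in $\calA{M}\times\{1/2\}$ one reads off $\calA{\varphi}$ uniquely on the $D^{k-1}_-\times D^{n-k}$ side. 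Once the splitting and the identity-on-back-cover normalization are in place, the commutativity of Figure~\ref{fig:diagram} is immediate from the definitions, and Corollary~\ref{cor partially simplifying the page} follows since exchange moves are a finite sequence of handle slides, which preserve the diffeomorphism type of the underlying manifold~\cite{gs}.
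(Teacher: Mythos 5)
Your proof is correct and takes essentially the same approach as the paper. The bulk of your argument---sliding the dual of each selected handle onto the $0$-handle, recognizing the result as $\operatorname{hob}(M\mathop{\natural}(S^{n-k}\times D^{k-1}))$, sliding $\mathbf{h}_j^k$ off the $0$-handle, splitting the resulting decomposition into a half open book on $\calA{M}$ and its dual, and reading off $\calA{\varphi}$ from the gluing map's restriction to the back cover---is exactly the content of Theorem~\ref{thm technical}, which the paper's own proof of the corollary simply cites before making the one observation you also close with, namely that exchange moves are handle slides and hence preserve the diffeomorphism type of $\operatorname{Ob}(M,\varphi)$.
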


\begin{remark}\label{remark almost canonical page}
    Quinn proved that any open book of dimension $n$ admits an almost canonical page~\cite{quinn}, i.e.\ a page that admits a handle decomposition with no handles of indices greater than $\lceil\frac{n}{2}\rceil$. We recover this result for open books whose page admits a handle decomposition such that the selection $$\mathcal{S}=\{(j,k)|\lceil\frac{n}{2}\rceil<k\leq n-1,1\leq j\leq\mu_k\}$$ is exchangeable because the exchanged page $\calS{M}$ would have no handles of indices exceeding $\lceil\frac{n}{2}\rceil$. Notably, we can construct almost canonical pages for open books with trivial monodromy using handle exchange moves.
\end{remark}

\begin{example}\label{ex simplifying simple page}
Suppose $2\leq k\leq n-2$, then $S^{k}\times D^{n-1-k}$ admits a handle decomposition in $\mathcal{H}(0,\dots,0,\mu_k=1,0,\dots,0)$. Consider the identity map as monodromy, then the selection $\mathcal{A}=\{\operatorname{h}_1^k\}$ is exchangeable and the exchanged page is given by $\calA{{(S^{k}\times D^{n-1-k})}}={S^{n-k}\times D^{k-1}}$. The new monodromy map is given by $\calA{{\operatorname{id}_{S^k\times D^{n-1-k}}}}=\operatorname{id}_{S^{n-k}\times D^{k-1}}$.
Alternatively, 
%For any closed manifold $C$, it follows that $\operatorname{Ob}(M\times C,\operatorname{id})\cong \operatorname{Ob}(M,\operatorname{id})\times C$. 
$\operatorname{Ob}(S^k\times D^{n-1-k},\operatorname{id})$
$$\cong S^k\times\operatorname{Ob}(D^{n-1-k},\operatorname{id}) \cong\operatorname{Ob}(D^{k-1},\operatorname{id})\times S^{n-k}\cong\operatorname{Ob}({S}^{n-k}\times D^{k-1},\operatorname{id}).$$

In general, the boundary connected sum $\mathop{\natural}_{i=1}^{n-2}\mathop{\natural}_{\mu_i}(S^{i}\times D^{n-1-i})$ admits a handle decomposition in $\mathcal{H}(\mu_1,\mu_{2},\dots,\mu_{n-2})$. The selection $\mathcal{A}=\{(j,k)|2\leq k\leq n-2,\\ 1\leq j\leq \mu_k\}$ of all handles of indices greater than or equal to two is exchangeable with respect to the trivial monodromy map and the exchanged page is given by 
$$\calA{(\mathop{\natural}_{i=1}^{n-2}\mathop{\natural}_{\mu_i}(S^{i}\times D^{n-1-i}))}=\mathop{\natural}_{i=1}^{n-2}\mathop{\natural}_{\nu_i}(S^{i}\times D^{n-1-i}),$$ where $\mu_1=\nu_1$ and $\mu_i=\nu_{n-i}$ for all $i\in[2,n-2]$. Therefore, $$\operatorname{Ob}(\mathop{\natural}_{i=1}^{n-2}\mathop{\natural}_{\mu_i}(S^{i}\times D^{n-1-i}),\operatorname{id})\cong \operatorname{Ob}(\mathop{\natural}_{i=1}^{n-2}\mathop{\natural}_{\nu_i}(S^{i}\times D^{n-1-i}),\operatorname{id})\cong\#_{i=1}^{n-2}\#_{\mu_i} (S^{i}\times S^{n-i}).$$

%Since $\mathbf{S}$ is a simple page, the simplified monodromy map $\calS{\operatorname{id}}=\operatorname{id}_{\mathbf{S}_{\mu_1,\mu_{n-2},\dots,\mu_{2}}}$ remains trivial. Therefore, by Theorem~\ref{cor partially simplifying the page}, we obtain $$\operatorname{Ob}(\mathop{\natural}_{i=1}^{n-2}\mathop{\natural}_{\mu_i}(S^{i}\times D^{n-1-i}),\operatorname{id})\cong\operatorname{Ob}(\mathbf{S}_{\mu_1,\mu_{n-2},\dots,\mu_{2}},\operatorname{id}).$$

%For a singleton selection $\mathcal{A}=\{(j,k)\}$, the exchanged page is given by $${(\mathbf{S}_{\mu_1,\mu_{2},\dots,\mu_j,\dots,\mu_{n-j},\dots,\mu_{n-2}})}^{\mathcal{A}}=\mathbf{S}_{\mu_1,\mu_{2},\dots,\mu_{j}-1,\dots,\mu_{n-j}+1,\dots,\mu_{n-2}}.$$ By iteration, the open book decomposition becomes $$\operatorname{Ob}(\mathbf{S},\operatorname{id})\cong\operatorname{Ob}(\mathbf{S}_{\mu_1,\mu_{2},\dots,\mu_{n-j},\dots,\mu_{j},\dots,\mu_{n-2}},\operatorname{id}).$$

%More generally, by choosing different selections, we can obtain various book decompositions on $\operatorname{Ob}(\mathbf{S},\operatorname{id})$
\end{example}

\section{Main results and relation to previously known stabilizations}

\subsection*{Proof of Theorem~\ref{thm stabilization of open books}}\label{sec stabilization}
    Suppose $(M,\varphi)$ is an open book decomposition on a manifold $X$ of dimension $n$. Fix a natural number $k\in[2,n-1]$. Consider the boundary connected sum $M\mathop{\natural} (S^{k-1}\times D^{n-k})$ along a chosen connected component of $\partial M$. Let $U=D^{n-2}\subset\partial M$ denote part of the attaching region $\partial D^1\times D^{n-2}$ of the \handle{1} used for forming the boundary connected sum. Without loss of generality, $U$ is contained in the boundary of an $(n-1)$-ball $U'\subset M$, as shown in Figure~\ref{fig:subset_U}, such that the restriction $\varphi|_{U'}$ of the monodromy map on $U'$ is equal to $\operatorname{id}_{U'}$. 
    \begin{figure}
        \centering
        \includegraphics[scale=0.16]{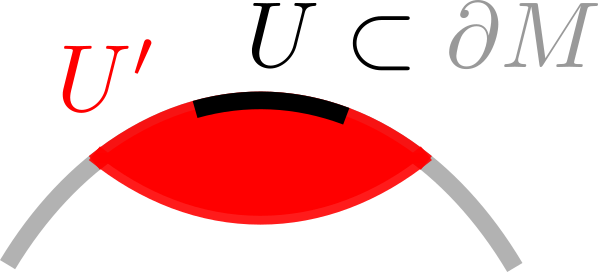}
        \caption{We perform boundary connected sum along $U$, which lies in a neighborhood of $\partial M$ where the monodromy is trivial.}
        \label{fig:subset_U}
    \end{figure}
    We regard $${S}^{k-1}\times D^{n-k}\cong {D^{k-1}\times D^{n-k}}{\cup_{\operatorname{id}_{\partial D^{k-1}\times D^{n-k}}} D^{k-1}\times D^{n-k}}$$ as a \handle{(k-1)} attached to $D^{k-1}\times D^{n-k}$. Let $\operatorname{h}^k$ be a \handle{k} that cancels with the \handle{(k-1)}, thus there exists a diffeomorphism $g$ from $D^{k-1}\times D^{n-k}$ to $(S^{k-1}\times D^{n-k})\cup \operatorname{h}^k$. Choose a diffeomorphism $f$ from $D^{n-1}$ to $D^{k-1}\times D^{n-k}$, then $M$ is diffeomorphic to $M\mathop{\natural} (S^{k-1}\times D^{n-k})\cup \operatorname{h}^k$ via a map $F$ that equals the identity outside of $U'$ and that is given by the composition $g\circ f$ on $U'$. Let $\varphi$ also denote the induced map on $M\mathop{\natural} (S^{k-1}\times D^{n-k})\cup \operatorname{h}^k$ given by $\varphi$ on $M-U'$ and identity elsewhere, then $F\circ \varphi=\varphi\circ F$. We now work with $(M\mathop{\natural} (S^{k-1}\times D^{n-k})\cup \operatorname{h}^k,\varphi)$ which is an open book decomposition on $X$ equivalent to the original. Since there are no handles attached along the belt sphere $b(\operatorname{h}^k)$ and the restriction of $\varphi$ is trivial on $\operatorname{h}^k$, $\mathcal{A}=\{\operatorname{h}^k\}$ is an exchangeable selection. By Corollary~\ref{cor partially simplifying the page}, there exists a monodromy map $\calA{\varphi}$ on $\calA{(M\mathop{\natural} (S^{k-1}\times D^{n-k})\cup \operatorname{h}^k)}$ such that $\operatorname{Ob}(\calA{(M\mathop{\natural} (S^{k-1}\times D^{n-k})\cup \operatorname{h}^k)},\calA{\varphi})$ is diffeomorphic to $\operatorname{Ob}(M\mathop{\natural} (S^{k-1}\times D^{n-k})\cup \operatorname{h}^k,\varphi)$. The exchanged page $$\calA{(M\mathop{\natural} (S^{k-1}\times D^{n-k})\cup \operatorname{h}^k)}$$ is given by $$M\mathop{\natural} (S^{k-1}\times D^{n-k})\mathop{\natural} (S^{n-k}\times D^{k-1}),$$ which is not diffeomorphic to $M$, providing an open book not equivalent to $(M,\varphi)$. \qed

\subsection*{Proof of Theorem~\ref{thm odd stabilization}}
    It suffices to show that $S^{2\ell+1}$ admits an open book decomposition of the form $(S^\ell\times D^\ell,\tau)$ for some monodromy map on $S^\ell\times D^\ell$.
    
    % Choose a handle decomposition $\operatorname{h}$ on $M$, then there is an induced handle decomposition $\mathbf{h}\cup_\varphi\mathbf{h}^*$ on $\operatorname{Ob}(M,\varphi)$. Remove a small $2\ell$-disk inside the open book and decompose it as two $\ell$-disks. One of which we regard as attaching a $S^\ell\times D^\ell$ to the half open book $\operatorname{hob}(M)$. The other $\ell$-disk we regard as a canceling \handle{(\ell)+1}.
    
    % We insert a pair of canceling handles 

    The page $S^{\ell}\times D^{\ell}$ can be considered as the result of attaching an \handle{\ell} $\operatorname{h}^\ell$ to $D^{2\ell}$ up to diffeomorphism, as described in the proof of Theorem~\ref{thm stabilization of open books}. Similarly, we view the half open book $\operatorname{hob}(S^\ell\times D^\ell)$ as the result of attaching an \handle{\ell} $\mathbf{h}^\ell$ to $\operatorname{hob}(D^{2\ell})\cong D^{2\ell+1}$.
    %Since $\operatorname{hob}(S^{\ell}\times D^{\ell})\cong \operatorname{hob}(M) \mathop{\natural} S^{\ell}\times D^{\ell+1}$, we view the half open book as the result of attaching a \handle{\ell} $\mathbf{h}^\ell$ to $\operatorname{hob}(M)$. 
    Let $\Phi\colon \partial D^{\ell+1}\times D^{\ell}\rightarrow \partial \operatorname{hob}( S^{\ell}\times D^{\ell})$ be the attaching map of an \handle{(\ell+1)} $\mathbf{h}^{\ell+1}$ that cancels with $\mathbf{h}^\ell$. Without loss of generality, we may assume that this $(2\ell+1)$-dimensional \handle{(\ell+1)} $\mathbf{h}^{\ell+1}$ is induced by a $(2\ell)$-dimensional \handle{(\ell+1)} $\operatorname{h}^{\ell+1}$ that forms a canceling pair with $\operatorname{h}^\ell$. Let ${D}^{\ell}_\pm\times D^{\ell}\subset \partial D^{\ell+1}\times D^{\ell}$ denote the upper and lower thickened hemispheres of its attaching region. By isotopy, $$\Phi({D}^{\ell}_+\times D^{\ell})=D^{\ell}\times D^{\ell}=\operatorname{h}^\ell\subseteq S^{\ell}\times D^{\ell}\times\{1/2\}$$ in the back cover and $$\Phi({D}^{\ell}_-\times D^{\ell})\subseteq S^{\ell}\times D^{\ell}\times\{0\}$$ in the front cover. 
    
    We obtain $S^{2\ell+1}$ by attaching a \handle{(2\ell+1)} along $\partial (\operatorname{hob}({S^\ell\times D^\ell})\cup_\Phi\mathbf{h}^{\ell+1})$, which is diffeomorphic to $\partial\operatorname{hob}(D^{2\ell})=S^{2\ell}$. The union of the \handle{(\ell+1)} and the \handle{(2\ell+1)} gives a relative handle decomposition on $(\operatorname{hob}(S^\ell\times D^\ell),\overline{\partial\operatorname{hob}(S^\ell\times D^\ell)})$. Therefore, $S^{2\ell+1}$ is the union of two half open books with page $S^\ell\times D^\ell$, and the front cover of the second copy is glued to the back cover of the first with the identity map on $S^\ell\times D^\ell$. Let ${\tau}^+\colon \partial\operatorname{hob}(S^\ell\times D^\ell)\rightarrow \partial\operatorname{hob}(S^\ell\times D^\ell)$ denote the gluing map such that $$S^{2\ell+1}=\operatorname{hob}(S^\ell\times D^\ell)\cup_{{\tau}^+}\operatorname{hob}(S^\ell\times D^\ell),$$
    then by Lemma~\ref{lemma attaching region} the restriction of ${\tau}^+$ on the back cover gives the desired monodromy map $\tau$. \qed

%     Choose the desired monodromy map $\tau\colon S^\ell\times D^\ell\rightarrow S^\ell\times D^\ell$ such that $\tau(\operatorname{h}^\ell)=\tau(D^\ell\times D^\ell)=\Phi(D_-^\ell\times D^\ell)$ up to interchanging the two factors.
    
% By Lemma~\ref{lemma attaching region}, when we glue two copies of $\operatorname{hob}(M\mathop{\natural} S^{\ell}\times D^{\ell})$ using the monodromy map $\varphi\circ\tau$, the attaching map of the dual handle $(\mathbf{h}^\ell)^*$ is precisely given by $\Phi$. Consequently, the \handle{(\ell+1)} $(\mathbf{h}^\ell)^*$ cancels with $\mathbf{h}^\ell$ and we are left with gluing two copies of $\operatorname{hob}(M)$ giving $\operatorname{Ob}(M,\varphi)$.

\begin{remark}\label{remark contact}
    A contact analog~\cite{Giroux, Seidel} of Theorem~\ref{thm odd stabilization} is well known. Our version is closely related to~\cite[Definition~2.6]{Saha}, which is a special case of~\cite[Theorem~4.6]{van_Koert}. A contact open book is an open book $(M,\varphi)$  with an exact symplectic page $M$ (say of dimension $2\ell$) and a boundary preserving symplectomorphism $\varphi$. The construction of a contact $1$-form on a contact open book can be found in~\cite{Geiges_book,TW}. Moreover, every contact manifold is \textit{supported by} an open book, i.e.\ is a contact open book~\cite{breen2024girouxcorrespondencearbitrarydimensions, contact, Giroux, licata2024girouxcorrespondencedimension3}. Given a contact open book $(M,\varphi)$ with the symplectic form on $M$ given by $\omega=d\lambda$, an $(\ell-1)$-sphere in $\partial M$ is said to be Legendrian if $\lambda$ vanishes on it. The result of attaching a Weinstein \handle{\ell} $\operatorname{h}_w$ to $M$ along a Lagrangian $(\ell-1)$-sphere $S$ carries a preferred symplectic structure~\cite{Weinstein}. If the core disk of the \handle{\ell} union a Lagrangian disk with boundary $S$ gives a $\ell$-sphere, then composing $\varphi$ with a Dehn--Seidel twist $\tau_{DS}$ along this $\ell$-sphere, produces a monodromy on $M\cup\operatorname{h}_w$ such that $(M,\varphi)$ and $(M\cup\operatorname{h}_w,{\varphi\circ\tau_{DS}})$ are contactomorphic contact open books. See~\cite{van_Koert} for relevant details. In our situation, we only attach $\ell$-handles to the \handle{0} of the page $M$ by an embedding of $\partial D^{\ell}\times D^{\ell}\rightarrow\partial D^{2\ell}$ that extend to an embedding $D^\ell\times D^\ell \rightarrow S^{2\ell-1}$ such that the result of the handle attachment is diffeomorphic to $M\mathop{\natural} (S^{\ell}\times D^{\ell})$ as in~\cite[Example~4.1.4.(b)]{gs}.
\end{remark}

\begin{remark}\label{remark on relation with other stabilizations}
The $2$-stabilization given in Theorem~\ref{thm stabilization of open books} corresponds to performing the Hopf plumbing twice. This can be seen through the sequence of isotopies of Heegaard diagrams as illustrated in Figure~\ref{fig:comparison_with_std_stabilization}; see also~\cite[Section~5.1]{hsueh2023kirby}. In these diagrams, each pair of identified disks denotes the attaching region of a \handle{1}, while each closed curve indicates the attaching sphere of a \handle{2}. The starting diagram represents $S^3$ and captures the $2$-stabilization of $(D^2,\operatorname{id})$ as the \handle{2} induced by the \handle{1} labeled A cancels with the \handle{1} labeled B, and vice versa. The final diagram depicts the connected sum of two copies of the open book with Hopf link binding. The two diagrams are isotopic, without any handle slides, and hence represent equivalent open books on $S^3$. The monodromy map of the open book on $S^3$ given by the $2$-stabilization of the standard open book on $S^3$ is called a barbell diffeomorphism in~\cite{budney_gabai_2}. 

\begin{figure}
    \centering
    \includegraphics[width=0.85\linewidth]{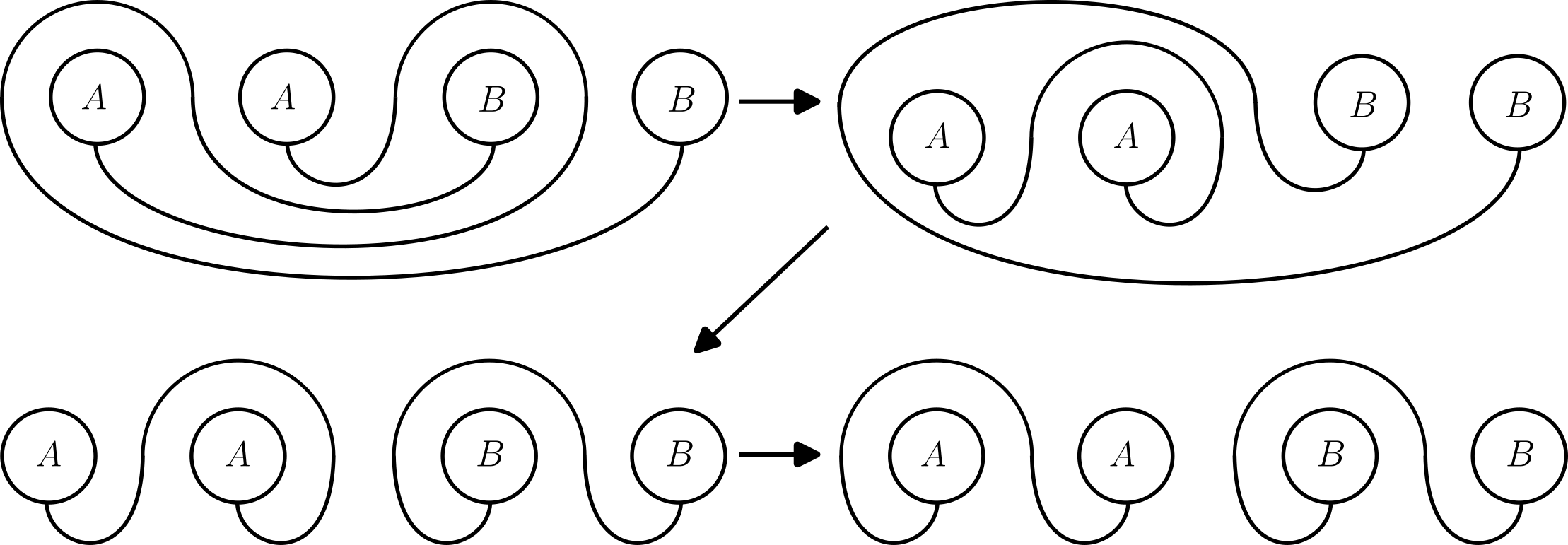}
    \caption{$2$-stabilization is Hopf plumbing twice. From the penultimate diagram to the final one: rotate the disk $A$ by $2\pi$.}
    \label{fig:comparison_with_std_stabilization}
\end{figure}
    % \begin{figure}
    %     \begin{subfigure}{0.45\textwidth}
    %             \centering
    %             \includegraphics[width=\textwidth]{twice_std_stabilized.png}
    %             \caption{Two Hopf plumbing on $(D^2,\operatorname{id})$. Each \handle{2} cancels with the \handle{1} that induces it.}
    %     \end{subfigure}
    %         \hfill
    %     \begin{subfigure}{0.45\textwidth}
    %             \centering
    %             \includegraphics[width=\textwidth]{once_my_stabilized.png}
    %             \caption{The $2$-stabilization on $(D^2,\operatorname{id})$. The \handle{2} induced by the \handle{1} labeled A cancels with the \handle{1} labeled B, and vice versa.}
    %         \end{subfigure}
    %     \caption{}
    %     \label{fig:comparison_with_std_stabilization}
    % \end{figure} 
\end{remark}

\begin{remark}
    Open books on $S^5$ were intensively studied in~\cite{Saeki_1,Saeki_2,Saeki_3}. Saeki constructed an open book on $S^5$ with page $(S^2\times S^2)\#(S^2\times S^2)- D^4$, that he called a stabilizer, to show that an algebraically fibered simple 3-knot is stably geometrically fibered~\cite{Saeki_2}. This open book has binding $S^3$, thus cannot be obtained from $(D^4,\operatorname{id})$ by a single $2$-, $3$-, or $4$-stabilization defined in Theorem~\ref{thm stabilization of open books}.
\end{remark}

%One way of stabilizing odd-dimensional open books involves adding a \handle{\ell} to a $2\ell$-dimensional page and changing the monodromy map, for more details see Section~\ref{sec stabilization} or~\cite{OvK}.

\subsection*{Proof of Theorem~\ref{thm book equivalent monodromies}}
    By applying Theorem~\ref{thm stabilization of open books} to the standard open book decomposition $(D^{n-1},\operatorname{id})$ on $S^n$, we obtain the open book decomposition $({(S^{k-1}\times D^{n-k})}\\ \mathop{\natural} {(S^{n-k}\times D^{k-1})},\tau_k)$ on $S^n$, not equivalent to $(D^{n-1},\operatorname{id})$, for each $n\geq3$ and $k\in[2,n-1]$. Suppose $k\in[2,\halff]$ and $n\geq4$. Let $$\tau_{k},\tau_{n-k+1}\colon  (S^{k-1}\times D^{n-k})\mathop{\natural} (S^{n-k}\times D^{k-1})\rightarrow (S^{k-1}\times D^{n-k})\mathop{\natural} (S^{n-k}\times D^{k-1})$$ be the monodromy maps obtained by Theorem~\ref{thm stabilization of open books}. To keep notation simple, let
    $$\tau_{k},\tau_{n-k+1}\colon (S^{k-1}\times S^{n-k})\#(S^{n-k}\times S^{k-1})\rightarrow (S^{k-1}\times S^{n-k})\#(S^{n-k}\times S^{k-1})$$
    also denote their trivial extension on the double $$D ( (S^{k-1}\times D^{n-k})\mathop{\natural} (S^{n-k}\times D^{k-1}))\cong(S^{k-1}\times S^{n-k})\#(S^{n-k}\times S^{k-1}).$$
    
    Consider the following induced maps on homology 
    $$\mathrm{H}_{k-1}(\tau_{k}),
    \mathrm{H}_{n-k}(\tau_{k}),
    \mathrm{H}_{k-1}(\tau_{n-k+1}),
    \mathrm{H}_{n-k}(\tau_{n-k+1})\colon\mathbb{Z}\times\mathbb{Z}\rightarrow\mathbb{Z}\times\mathbb{Z}.$$
    It follows from the two claims below that $\tau_{k}$ and $\tau_{n-k+1}$ are non-isotopic. Claims:
    \begin{enumerate}
        \item $\mathrm{H}_{k-1}(\tau_{k})$ is non-trivial, whereas $\mathrm{H}_{n-k}(\tau_{k})$ is trivial.
        \item $\mathrm{H}_{n-k}(\tau_{n-k+1})$ is non-trivial, whereas $\mathrm{H}_{k-1}(\tau_{n-k+1})$ is trivial.
    \end{enumerate} Since the proofs of Claims 1 and 2 are similar, we omit the latter.

    We like to represent generators of the relevant homology groups of the double in terms of submanifolds in $(S^{k-1}\times D^{n-k})\mathop{\natural} (S^{n-k}\times D^{k-1})$. See Figure~\ref{fig:generators} for a schematic of the situation. We can generate 
    $$\mathrm{H}_{k-1}((S^{k-1}\times S^{n-k})\#(S^{n-k}\times S^{k-1}))$$ by $\lambda=S^{k-1}\times\{\text{pt}\}$ and $\mu$ the double of $\{\text{pt}\}\times D^{k-1}$. Similarly, $$\mathrm{H}_{n-k}((S^{k-1}\times S^{n-k})\# (S^{n-k}\times S^{k-1}))$$ is generated by $\alpha$ the double of $\{\text{pt}\}\times D^{n-k}$ and $\beta=S^{n-k}\times\{q\}$.
    \begin{figure}
        \centering
        \includegraphics[width=0.5\linewidth]{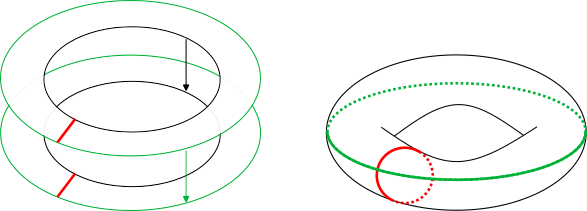}
        \caption{Generators of $\mathrm{H}_1D(S^1\times D^1)$ are given by the double of $\{\text{pt}\}\times D^1$ (red) and $S^1\times q$ (green) $\subset S^1\times D^1$, where $q\in \partial D^1$.}  
        \label{fig:generators}
    \end{figure}

    $\tau_{k}$ maps $\lambda$ to $\lambda$, since $\tau_{k}$ is trivial on $S^{k-1}\times D^{n-k}$, and $\mu$ to $\lambda\pm\mu$, which is isotopic to the belt sphere of the \handle{(k-1)} induced by $S^{k-1}\times D^{n-k}$. Thus $\mathrm{H}_{k-1}(\tau_{k})$ is non-trivial. It remains to show that $\mathrm{H}_{n-k}(\tau_{k})$ is the identity. $\tau_{k}$ sends $\alpha$ to $\alpha$ again because it is trivial on $S^{k-1}\times D^{n-k}$. Since $\tau_{k}$ is trivial near $S^{n-k}\times\partial D^{k-1}$, therefore $\mathrm{H}_{n-k}(\tau_{k})(\beta)=\beta$.\qed

\begin{remark}
    Infinitely many pairs of open book decompositions on the $3$-sphere having non-isotopic monodromy maps on a common page can be derived from~\cite{etnyre_li}. Etnyre and Li constructed a monodromy map $\phi_n$ on $\mathop{\natural}_{n-1} (S^1\times D^1$), such that $(\mathop{\natural}_{n-1} (S^1\times D^1),\phi_n)$ is an open book on $S^3$ for each $n\geq5$, where $\phi_n$ is a composition of Dehn twists along curves shown on the left of Figure~\ref{fig:planar_monodromy} (Figure~1 in~\cite{etnyre_li}). Let $\varphi_n$ denote the monodromy map on $\mathop{\natural}_{n-1} (S^1\times D^1)$ given by the composition of Dehn twists along the curves given to the right of Figure~\ref{fig:planar_monodromy}, then $(\mathop{\natural}_{n-1} (S^1\times D^1),\varphi_n)$ is another open book on $S^3$. It is shown in~\cite{etnyre_li} that $\phi_n$ has essential translation distances $dist_e(\phi_n)=2$, and since $(\mathop{\natural}_{n-1}(S^1\times D^1),\varphi_n)$ is destabilizable $dist_e(\varphi_n)=1$. Therefore, $\phi_n$ and $\varphi_n$ are non-isotopic monodromy maps such that $\operatorname{Ob}(\mathop{\natural}_{n-1} (S^1\times D^1),\phi_n)$ and $\operatorname{Ob}(\mathop{\natural}_{n-1} (S^1\times D^1),\varphi_n)$ are diffeomorphic. Notably, their result showed the existence of a non-destabilizable planar open book with $n$ binding components, supporting the standard tight contact structure on $S^3$.
\end{remark}

\begin{figure}
    \begin{subfigure}{0.45\textwidth}
            \centering
            \includegraphics[width=0.55\textwidth]{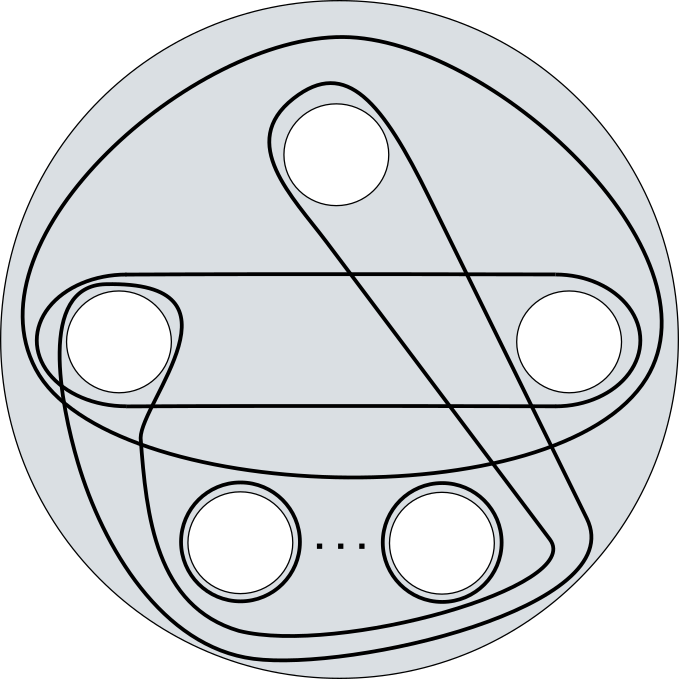}
            %\caption{}
    \end{subfigure}
        \hfill
    \begin{subfigure}{0.45\textwidth}
            \centering
            \includegraphics[width=0.55\textwidth]{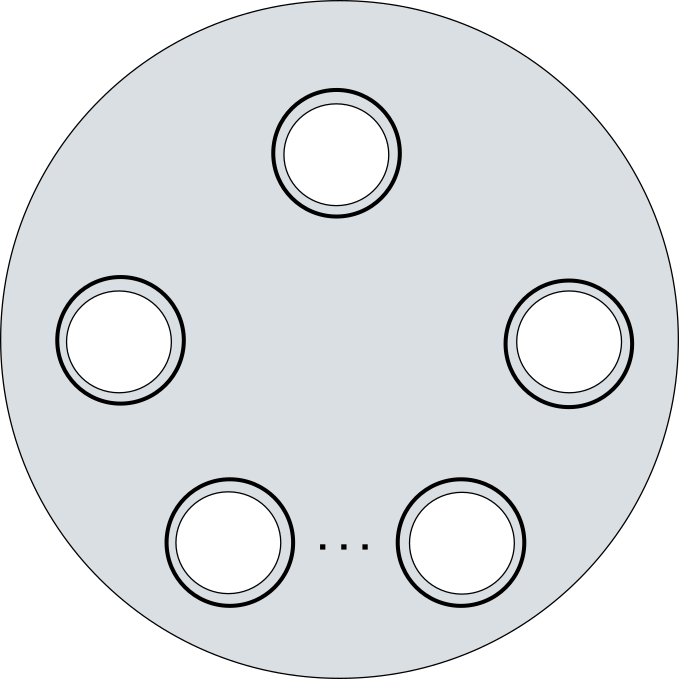}
            %\caption{}
        \end{subfigure}
    \caption{}
    \label{fig:planar_monodromy}
\end{figure}

\section{Application}
It was observed in~\cite{hsueh2023kirby} that for each $4$-dimensional manifold $X$ admitting an open book decomposition with trivial monodromy, there exists a monodromy $$\varphi\colon\mathop{\natural}_{\mu_1}(S^1\times D^{2})\mathop{\natural}_{\mu_2}(S^2\times D^1)\rightarrow\mathop{\natural}_{\mu_1}(S^1\times D^{2})\mathop{\natural}_{\mu_2}(S^2\times D^1)$$ on a $3$-dimensional genus $\mu_1$ handlebody with $\mu_2$ punctures given by the composition of twists along embedded spheres and tori such that $$(\mathop{\natural}_{\mu_1}(S^1\times D^{2})\mathop{\natural}_{\mu_2}(S^2\times D^1),\varphi)$$ is an open book decomposition on $X$. We develop a higher-dimensional analogue of this result. Furthermore, we show that two open books $(M,\operatorname{id}_{M})$ and $(N,\operatorname{id}_{N})$ can be stabilized to have a common page given that the Euler characteristic of the pages are equal -- modulo 2 if the pages are even-dimensional.

\subsection*{Proof of Theorem~\ref{thm generalization}}
    Suppose $\varphi=\operatorname{id}$ and let $\operatorname{h}\in\mathcal{H}(\nu_1,\nu_{2},\dots,\nu_{n-2})$ be a handle decomposition on $M$. Then the selection $\mathcal{A}=\{(j,k)|2\leq k\leq n-2, 1\leq j\leq \nu_k\}$ of all handles of indices greater than or equal to two is exchangeable and the exchanged page is given by        $$\calA{M}=\mathop{\natural}_{\nu_1}({S}^1\times D^{n-2} )\mathop{\natural}_{\nu_{2}}({S}^{n-2}\times D^{1})\mathop{\natural}\dots \mathop{\natural}_{\nu_{n-2}}({S}^{2}\times D^{n-3}).$$
    Recall that each boundary connected summand $S^{n-k}\times D^{k-1}$ was obtained in exchange for a \handle{k} of index $k\geq2$ and that the result of attaching $\nu_1$ \handles{1} to a \handle{0} $D^{n-1}$ is always diffeomorphic to $\mathop{\natural}_{\nu_1}(S^{1}\times D^{n-2})$~\cite[Example~4.1.4.(b)]{gs}. Let $\mu_1=\nu_1$ and $\mu_i=\nu_{n-i}$ for all $i\in[2,n-2]$ and rewrite $\calA{M}$ as $\mathop{\natural}_{i=1}^{n-2}\mathop{\natural}_{\mu_i}{(S^{i}\times D^{n-1-i})}$. By Corollary~\ref{cor partially simplifying the page}, there exists a monodromy map $\calA{\varphi}$ on $\calA{M}$ such that 
    $$(\mathop{\natural}_{i=1}^{n-2}\mathop{\natural}_{\mu_i}(S^{i}\times D^{n-1-i}),\calA{\varphi})$$
    is an open book decomposition on $X$. The diffeomorphism type of $\mathop{\natural}_{i=1}^{n-2}\mathop{\natural}_{\mu_i}(S^{i}\times D^{n-1-i})$ is well-defined without having to specify the boundary components, therefore $\calA{\varphi}$ only depends on $\mu=(\mu_1,\mu_2,\dots,\mu_{n-2})$. It follows that if $M$ admits a handle decomposition $\operatorname{h}\in\mathcal{H}(\mu_1,\mu_{n-2},\dots,\mu_{2})$, then $$\operatorname{Ob}(\mathop{\natural}_{i=1}^{n-2}\mathop{\natural}_{\mu_i}(S^{i}\times D^{n-1-i}),\sigma_\mu)\cong\operatorname{Ob}(M,\operatorname{id})$$ for some monodromy map $\sigma_\mu$ on $\mathop{\natural}_{i=1}^{n-2}\mathop{\natural}_{\mu_i}(S^{i}\times D^{n-1-i})$. \qed

\begin{lemma}\label{lemma euler char}
    If $M$ and $N$ have the same Euler characteristic, then they admit handle decompositions with the same number of handles of each index.
\end{lemma}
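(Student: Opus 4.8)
The plan is to reduce the statement to a one-line lattice computation. We treat the case $\partial M,\partial N\neq\emptyset$, which is the one used in Corollary~\ref{cor common page}; the closed case is identical after also normalizing the top-dimensional handle. Put $m=\dim M=\dim N$. First I would fix handle decompositions of $M$ and $N$ with a single \handle{0} and no \handle{m} (as recalled in Section~2, following~\cite{gs}) and record their handle-count vectors $a=(1,\mu_1,\dots,\mu_{m-1})$ and $b=(1,\nu_1,\dots,\nu_{m-1})$ in $\mathbb Z^{m}$, indexed by $0,\dots,m-1$. The only operation I will use is insertion of a canceling pair made of a \handle{k} and a \handle{(k+1)} with $1\le k\le m-2$: this preserves the diffeomorphism type, keeps the number of \handles{0} equal to one and creates no \handle{m}, and it raises precisely the entries in positions $k$ and $k+1$ by one. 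Hence, after inserting $x_k$ such pairs at index $k$ into the decomposition of $M$ and $y_k$ such pairs into that of $N$, the handle-count vectors become $a+\sum_{k=1}^{m-2}x_k p_k$ and $b+\sum_{k=1}^{m-2}y_k p_k$, where $p_k:=e_k+e_{k+1}$ and $e_0,\dots,e_{m-1}$ is the standard basis of $\mathbb Z^{m}$. So it suffices to find nonnegative integers $x_k,y_k$ with $a+\sum_k x_k p_k=b+\sum_k y_k p_k$, i.e.\ with $\sum_k(y_k-x_k)p_k=d$, where $d:=a-b$.

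The key point is that equal Euler characteristic puts $d$ into the lattice generated by the $p_k$. Indeed $d_0=1-1=0$, and since the Euler characteristic is the alternating sum of handle counts, $\chi(M)=\chi(N)$ says exactly that $\langle s,d\rangle=0$ for the sign vector $s=\big((-1)^i\big)_{i=0}^{m-1}$; thus $d$ lies in $\Lambda:=\{v\in\mathbb Z^{m}:v_0=0,\ \langle s,v\rangle=0\}$. A direct check shows $p_1,\dots,p_{m-2}$ form a $\mathbb Z$-basis of $\Lambda$: writing $v=\sum_k z_k p_k$ and comparing coordinates gives $z_1=v_1$ and $z_k=v_k-z_{k-1}$, integers, while the last coordinate equation $v_{m-1}=z_{m-2}$ is forced precisely by $v\in\Lambda$. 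Therefore $d=\sum_{k=1}^{m-2}z_k p_k$ for unique $z_k\in\mathbb Z$, and taking $x_k:=\max(-z_k,0)$ and $y_k:=\max(z_k,0)$ yields nonnegative integers with $y_k-x_k=z_k$. Inserting these canceling pairs into the two fixed decompositions gives handle decompositions of $M$ and $N$ with the same number of handles in every index.

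I do not expect a genuine obstacle. The content is only that, by inserting $x_k$ canceling $(k,k+1)$-pairs into one decomposition and $y_k$ into the other, one can correct the two handle-count vectors by any element $\sum_k(y_k-x_k)p_k$ of the group generated by $p_1,\dots,p_{m-2}$, and that group is exactly $\Lambda$, the set of differences allowed by equal Euler characteristic. The points requiring care are routine: that inserting a canceling handle pair is a legitimate move not changing the diffeomorphism type (standard, see~\cite{gs}); that the index restriction $1\le k\le m-2$ is what preserves the "single \handle{0}, no \handle{m}" normalization, so that $a_0=b_0=1$ throughout; and the degenerate case $m=2$, where no canceling pairs of the allowed type exist but the hypothesis already forces $\mu_1=\nu_1$ and hence $a=b$. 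If one also wants the closed case, the same argument applies verbatim after normalizing to one \handle{m} as well, since neither $\Lambda$ nor the spanning set $\{p_k\}_{k=1}^{m-2}$ changes.
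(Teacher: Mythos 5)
Your proof is correct and is essentially the paper's argument: both insert canceling $(k,k{+}1)$-pairs for $1\le k\le m-2$ into whichever decomposition is deficient at a given index, and both observe that the Euler-characteristic hypothesis is exactly what makes the top index come out right. The paper runs the greedy index-by-index version of this (match the smallest mismatching index, propagate the surplus upward), which is the same computation you package as the statement that $p_1,\dots,p_{m-2}$ form a basis of the sublattice $\{v_0=0,\ \langle s,v\rangle=0\}$.
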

\begin{proof}
    Given a handle decomposition $\operatorname{h}\in\mathcal{H}(\mu_1,\dots,\mu_{n-2})$ on $(n-1)$-dimensional page $M$ and a natural number $t>\mu_j$, for some $1\leq j< n-2$, we obtain a new handle decomposition $\operatorname{h}'\in\mathcal{H}(\mu_1,\dots,\mu_{j-1},t,\mu_{j+1}+(t-\mu_j),\dots,\mu_{n-2})$ on $M$ by adding $(t-\mu_j)$ canceling pairs of $j$- and \handles{(j+1)} to $\operatorname{h}$.

    Choose handle decompositions $$\operatorname{h}_M\in\mathcal{H}(\mu_1,\dots,\mu_{n-2}) \text{ and }\operatorname{h}_N\in\mathcal{H}(\nu_1,\dots,\nu_{n-2})$$ on $M$ and $N$, respectively. Start with the smallest index $j>0$ with $\mu_j\neq\nu_j$. Without loss of generality assume $\mu_j>\nu_j$ and replace $\operatorname{h}_N$ with a handle decomposition on $N$ with $\mu_j$ $\handles{j}$ and $(\nu_{j+1}+\mu_j-\nu_j)$ \handles{(j+1)} as explained above. By repeating this process, we obtain handle decompositions $\operatorname{h}_M'$ and $\operatorname{h}_N'$ on $M$ and $N$, respectively, with an equal number of handles in each index less than $(n-2)$. Our hypothesis $\chi(M)=\chi(N)$ implies that $\operatorname{h}_M'$ and $\operatorname{h}_N'$ also have the same number of $\handles{($n-2$)}$.
\end{proof}

\subsection*{Proof of Corollary~\ref{cor common page}}
We want to show that $(M,\operatorname{id})$ and $(N,\operatorname{id})$ can be stabilized to open books with a common page of the form $\mathop{\natural}_{i=1}^{n-2}\mathop{\natural}_{\mu_i}(S^{i}\times D^{n-1-i})$ for some $(\mu_1,\dots,\mu_{n-2})$. The case where $M$ and $N$ are odd-dimensional follows from Theorem~\ref{thm generalization} and Lemma~\ref{lemma euler char}.

Suppose $M$ and $N$ are even-dimensional and have the same Euler characteristic modulo 2. By Theorem~\ref{thm generalization}, there exists $\mu=(\mu_1,\dots,\mu_{n-2})$ and $\nu=(\nu_1,\dots,\nu_{n-2})$ such that $(\mathop{\natural}_{i=1}^{n-2}\mathop{\natural}_{\mu_i}{(S^{i}\times D^{n-1-i})},\sigma_\mu)$ and $(\mathop{\natural}_{i=1}^{n-2}\mathop{\natural}_{\nu_i}{(S^{i}\times D^{n-1-i})},\sigma_\nu)$ are open book decompositions on $X$ and $Y$, respectively. It is elementary to check that $\chi(\mathop{\natural}_{i=1}^{n-2}\mathop{\natural}_{\mu_i}(S^{i}\times D^{n-1-i}))\equiv\chi(\mathop{\natural}_{i=1}^{n-2}\mathop{\natural}_{{\nu_i}}(S^{i}\times D^{n-1-i}))\mod2$. Observe that the Euler characteristic of the page $M$ and its $k$-th stabilized page $$M\mathop{\natural}({S}^{k-1}\times D^{n-k})\mathop{\natural}({S}^{n-k}\times D^{k-1})$$ are related by $$\chi({M})+(-1)^{(k-1)}\cdot 2=\chi(M\mathop{\natural}({S}^{k-1}\times D^{n-k})\mathop{\natural}({S}^{n-k}\times D^{k-1}))$$
because $k-1\equiv n-k$ modulo 2. Hence we can stabilize $\mathop{\natural}_{i=1}^{n-2}\mathop{\natural}_{\mu_i}(S^{i}\times D^{n-1-i})$ and $\mathop{\natural}_{i=1}^{n-2}\mathop{\natural}_{{\nu_i}}(S^{i}\times D^{n-1-i})$ sufficiently times such that the resulting pages have equal Euler characteristic. Then Lemma~\ref{lemma euler char} completes the proof of Corollary~\ref{cor common page}. \qed

\let\MRhref\undefined
\bibliographystyle{hamsalpha}  
\bibliography{Sources}  
\end{document}